\let\ams@starttoc\@starttoc
\let\@starttoc\ams@starttoc
\patchcmd{\@starttoc}{\makeatletter}{\makeatletter\parskip\z@}{}{}
\numberwithin{equation}{section}
  \newtheorem{theorem}{Theorem}[section]
  \newtheorem{proposition}[theorem]{Proposition}
  \newtheorem{lemma}[theorem]{Lemma}
   \newtheorem{corollary}[theorem]{Corollary}
\theoremstyle{definition}
\theoremstyle{remark}
\newtheorem{remark}[theorem]{Remark}
\newcommand{\lb}{\left (}
\newcommand{\rb}{\right )}
\newcommand{\lbb}{\left [}
\newcommand{\rbb}{\right ]}
\newcommand{\labs}{\left |}
\newcommand{\rabs}{\right |}
\newcommand{\lbrb}[1]{\lb #1 \rb}
\newcommand{\lbbrbb}[1]{\lbb#1\rbb}
\newcommand{\lbbrb}[1]{\lbb#1\rb}
\newcommand{\lbrbb}[1]{\lb#1\rbb}
\newcommand{\lbcurly}{\left\{}
\newcommand{\rbcurly}{\right\}}
\newcommand{\Hyp}{(\mathbf{H})}
\newcommand{\sobrackets}[1]{\mathrm{o}\lbrb{#1}}
\newcommand{\simi}{\stackrel{\infty}{\sim}}
\newcommand{\simo}{\stackrel{0}{\sim}}
\newcommand{\abs}[1]{\labs#1\rabs}
\newcommand{\curly}[1]{\lbcurly#1\rbcurly}
\newcommand{\so}[1]{\mathrm{o}\lbrb{#1}}
\newcommand{\Pbb}[1]{\Pb\lb #1\rb}
\newcommand{\Ebb}[1]{\Eb\lbb #1\rbb}
\newcommand{\LL}{L\'{e}vy }
\newcommand{\LLP}{L\'{e}vy process }
\newcommand{\LLPs}{L\'{e}vy processes }
\newcommand{\LLK}{\LL\!\!-Khintchine }
\newcommand{\mubar}[1]{\bar{\mu}\lbrb{#1}}
\newcommand{\Rez}{\Re\lbrb{z}}
\newcommand{\limi}[1]{\lim_{#1\to \infty}}
\newcommand{\limsupi}[1]{\limsup_{#1\to \infty}}
\newcommand{\liminfi}[1]{\liminf_{#1\to \infty}}
\newcommand{\limo}[1]{\lim_{#1\to 0}}
\newcommand{\limsupo}[1]{\limsup_{#1\to 0+}}
\newcommand{\liminfo}[1]{\liminf_{#1\to 0+}}
\newcommand{\Cb}{\mathbb{C}}
\newcommand{\C}{\mathbb{C}}
\newcommand{\Eb}{\mathbb{E}}
\newcommand{\N}{\mathbb{N}}
\newcommand{\Rb}{\mathbb{R}}
\newcommand{\R}{\mathbb{R}}
\newcommand{\Pb}{\mathbb{P}}
\newcommand{\Bc}{\mathcal{B}}
\newcommand{\Mcc}{\mathcal{M}}
\newcommand{\Nc}{\mathcal{N}}
\newcommand{\Prm}{\mathrm{P}}
\newcommand{\ind}[1]{\mathbb{I}_{\{#1\}}}
\newcommand{\IntOI}{\int_{0}^{\infty}}
\newcommand{\dr}{{\mathtt{d}}}
\newcommand{\phis}{\phi_{*}}
\newcommand{\vphis}{\varphi_{*}}
\newcommand{\Aphis}{A_{\phis}}
\newcommand{\Eph}{E_\phi}
\newcommand{\Ephs}{E_{\phis}}
\newcommand{\Gphis}{G_{\phis}}
\newcommand{\Lphis}{L_{\phis}}
\newcommand{\Lph}{L_\phi}
\newcommand{\Tph}{T_\phi}
\newcommand{\Tphis}{T_{\phis}}
\newcommand{\Uphis}{U_{\phis}}
\newcommand{\Wp}{W_\phi}
\newcommand{\Ip}{I_{\phi}}
\newcommand{\Mph}{\Mcc_{I_{\phi}}}
\newcommand{\fIp}{f_{\Ip}}
\newcommand{\D}{\mathrm{d}}
\DeclareMathOperator*{\argmin}{argmin} 
\newcommand{\minusone}{(-1)}
\newcommand{\ab}{a+ib}
\renewcommand{\Im}{\mathtt{Im}}
\renewcommand{\Re}{\mathtt{Re}}
\begin{document}
	\date{}
\renewcommand\Affilfont{\small}
\author[,1]{M. Minchev \thanks{
Email : mjminchev@fmi.uni-sofia.bg}}
\author[,1,2]{M. Savov \thanks{ 
Email: msavov@fmi.uni-sofia.bg, mladensavov@math.bas.bg.}
}
	
\affil[1]{Faculty of Mathematics and Informatics, Sofia University "St. Kliment Ohridski", 5,
James Bourchier blvd., 1164 Sofia, Bulgaria}

\affil[2]{Institute of Mathematics and Informatics,  Bulgarian Academy of Sciences, Akad.  Georgi Bonchev str., 
	Block 8, Sofia 1113, Bulgaria}

\title{Asymptotics for densities of exponential functionals of subordinators}

	\maketitle

\begin{abstract}
In this paper we derive non-classical
Tauberian asymptotics at infinity for the tail,
the density and its derivatives of a
large class of exponential functionals of
subordinators. More precisely, we consider the
case for which the \LL measure of the subordinator
satisfies the well-known and mild condition of
\textit{positive increase}.  This is achieved
via a convoluted application of the saddle
point method to the Mellin transform of these
exponential functionals which is given in
terms of Bernstein--gamma functions. To apply
the saddle point method, we improved the
Stirling type asymptotics for
Bernstein--gamma functions in the complex plane. As
an application we derive the asymptotics
of the density and its derivatives for all
exponential functionals of a broad class of non-decreasing,
potentially killed compound Poisson processes,
which turns out to be precisely as that of an
exponentially distributed random variable. We
show further that a large class of densities
are even analytic in a cone of the complex
plane.
\end{abstract}
	
	Keywords: Bernstein and Bernstein-gamma functions, Exponential functionals of Levy processes, Non-classical Tauberian theorems and asymptotic, Special functions.

	MSC2020 Classification: Primary 60G51; 40E05, 44A60, 60E07.
	
	\section{Introduction and motivation}\label{sec:intro}

Let $\Xi=\lbrb{\Xi_s}_{s\geq0}$ be a
one-dimensional \LL process with a \LLK
exponent $\Psi$.  Let $\mathbf{e}_q,q\geq 0,$
be an exponentially distributed random variable
with parameter $q$, which is independent of
$\Xi$, and set 
\begin{equation*}
    \begin{split}
        I_\Psi=\int_{0}^{\mathbf{e}_q}e^{-\Xi_s}\D s
    \end{split}
\end{equation*}
with the assumption that $\limi{s}\Xi_s=\infty$ provided $q=0$. It is
well known that the distributional and analytic properties of $I_{\Psi}$
are closely linked to the complex-analytic properties of the
Bernstein--gamma functions associated with the two Wiener--Hopf factors of
$\Psi$, i.e. $\phi_{\pm}$. In fact, most of the properties of $I_\Psi$
can be related to and extracted from the aforementioned Bernstein--gamma
functions, see for a full discussion the introduction and the main results
of \cite{ps16}. Concerning the tail $\Pbb{I_{\Psi}>x}$, as $x\to\infty$,
\cite[Theorem 2.11]{ps16} shows that in almost all cases its decay is
roughly of polynomial type with the polynomial exponent available from the
analytic properties of $\Psi$. Under a variant of the Cram\'er's condition
it is proved, see \cite[Theorem 2.11]{ps16}, that not only the tail, but
much more subtly its density and all of its derivatives have polynomial
decay which is the differentiated version of that of the tail.
However, in all those results there is one obvious and substantial
omission, namely the case when $\Xi$ is a subordinator (a non-decreasing
\LL process). In this setting one cannot expect to have polynomial decay
neither for the tail nor for its derivatives, which can be checked
from \cite[Theorem 2.11]{ps16}. 

In this paper, we set out to fill in the aforementioned gap. In more
detail, when $\Xi$ with Laplace exponent $\phi$  is a potentially killed driftless subordinator (otherwise it will have a
finite support, see \cite[Theorem 2.4 (2)]{ps16}) under a mild and
seemingly necessary assumption we evaluate the decay of $\Pbb{I_{\phi}>x}$
and most importantly of its derivatives. Our result states that
\begin{equation*}
\fIp^{(n)}(x)\sim 
\frac{C\vphis^n(x)\sqrt{\vphis'(x)}}{x^n}e^{-\int_{\phis(1)}^{x}\frac{\vphis(y)}{y}\D y} \, \text{ with }
C = \frac{\minusone^ne^{-\Tphis}}{\sqrt{2\pi\phis(1)}} ,
\end{equation*}
where $\fIp$ is the density of the exponential functional of the
subordinator associated with $\phi$, $\Tphis$ is a constant with explicit
expression, $\vphis,\phis$ are directly related to the Laplace exponent of
the subordinator $\phi$ and the asymptotic relation is at infinity.
Since we achieve this via a rather convoluted saddle point method, we are
forced to improve on the current knowledge of Bernstein--gamma functions by
developing further its Stirling type asymptotics. This in itself can have
implications as Bernstein--gamma functions naturally appear  in a number of
settings and encompass some well-known special functions such as the
Barnes-gamma function; see below for further discussion. One immediate
corollary is that under one more mild condition, the density of the
exponential functional is analytic in a cone of the complex
plane, which not only attests to the remarkable regularity exhibited by the
law of $I_{\Psi}$ but can also have similar deep implications as the
analyticity in a cone has had for the spectral expansions of
non-selfadjoint Markov semigroups derived in \cite{Patie-Savov-GL}. Via the
factorization of general exponential functionals as a product of an
exponential functional of a subordinator and
a random variable that is independent of it, see \cite[Theorem 2.22]{ps16}, one gets further information on
the former, e.g. the analyticity in some cone in the complex plane. 

 The study of exponential functionals has been initiated by Urbanik in
 \cite{Urbanik-95}. His work has been continued by M. Yor and co-authors, see
 \cite{Bertoin-Yor-05,Hirsch-Yor-13, Yor-01}. Subsequently, there have been a
 number of contributions to the topic, a small sample of which comprises
 \cite{Pardo-Patie-Savov-12,Patie-Savov-11,Patie-Savov-13}. The
 main reason for this interest is due to the fact that exponential
 functionals have played a role in various domains of theoretical and
 applied probability such as the spectral theory of some non-reversible
 Markov semigroups (\cite{Patie-Savov-GL,Patie-Savov-20,Patie-Savov-Zhao}),
 the study of random planar maps (\cite{Bertoin-Curien-15,Budd18}),
 limiting theorems for Markov chains (\cite{Bertoin-Igor-16}), positive
 self-similar Markov processes
 (\cite{Bertoin-Caballero-02,Bertoin2002,Caballero2006,Patie2009,Patie-06c}), financial and insurance mathematics
 (\cite{Hackmann-Kuznetsov-14,Patie-As,Kluppelberg-Lindner-Maller-04}), branching processes with
 immigration (\cite{Patie2009}), random processes in random environment
 (\cite{Li-XU-16,Palau-Pardo-Smadi-16, Xu-21}), extinction times of
 self-similar processes (\cite{Loeffen2019}), stationary distributions of \LL driven Ornstein-Uhlenbeck processes (\cite{Behme-Lindner-Maller-11,Behme-Lindner-Reker-21}) and others. Starting from
 \cite{Maulik-Zwart-06}, it has become clear that the Mellin transform is an
 excellent tool for the study of the properties of the exponential
 functional. This has been demonstrated for specific classes of \LLPs in
 \cite{Kuznetsov-Pardo}. In \cite{ps16} this approach has been exploited in
 general with many new and deeper properties
 for the exponential functionals derived from
 its Mellin transform, which is computed in
 terms of Bernstein--gamma functions, whose
 complex-analytic properties seemingly encode
 all the information about the properties of
 the exponential functionals. This has been further
 slightly and in a special context improved
 upon in \cite{Barker-Savov}. All evidence in
 the last advancements on exponential
 functionals points to the fact that the more
 we know about the properties of the
 Bernstein--gamma functions, the more we can
 say about the exponential functionals.
 
Since we deal with exponential functionals of
subordinators, we give some overview of the state of the art in this special case. Paper \cite{Alili-Jadidi-14} discusses some useful identities and properties such as the infinite divisibility of the logarithm of the exponential functional, whereas in \cite{Pardo-Rivero-Scheik-13} an integral equation for the density of the exponential functional has been derived. The work \cite{ps16} contains information about the support and smoothness of the density and furnishes many other properties for the exponential functional of subordinators, some of which are dispersed in the literature and in the aforementioned references. Regarding the asymptotic behaviour of the tail and its derivatives, there are results in \cite[Section 3]{Haas-Rivero-13} concerning the tail behaviour, but the asymptotics is basically on a logarithmic scale. In this paper, under the same condition as in \cite{Haas-Rivero-13}, we derive the asymptotics of the tail and all of its derivatives on the real scale. Moreover, we evaluate all involved constants. These results are reminiscent of the asymptotics derived for the stationary measure of the non-self adjoint Laguerre semigroups in \cite{Patie-Savov-20}, where the analysis has been conducted via a link with self-decomposability  which ensures log-concavity of the density of the stationary measure and thereby the application of the non-classical Tauberian theorems of Balkema et al. \cite{Balkema195}. The requirement for log-concavity is quite a restrictive one if it is to be deduced a priori and in \cite{Patie-Savov-20} the authors have the benefit of the fact that self-decomposable random variables often possess log-concave densities. Here, this is not possible. Therefore, we resort to a saddle point approach which can be connected to the route taken in Feigin et al. \cite{Feigin83} but we need to carry on a number of intermediate steps which make the application of the aforementioned results almost as difficult as reproving them. For the sake of clarity, we provide all necessary details. We add that non-classical Tauberian theorems have wide implications; see, for example, the study of the moment problem in \cite{PatieVai21}, but it is not often that one can prove such a result for a general class of random variables. By achieving precisely the latter, we believe that this work reaffirms further the usefulness of Bernstein--gamma functions.

Finally, we recall that the exponential
functionals of subordinators play a role
in the study of the exponential functionals
of \LLPs since the latter factorizes as
an independent product of the exponential
functional of the ladder height processes
and another positive random variable.
However, this is not the only
application. For example, the asymptotics of the tail crucially appears  in
the study of fragmentation processes, see
\cite{Bertoin-Yor-05} for some remarks and the very recent significant improvements in \cite{Haas-21(a)}, and in the study
of Yaglom limits of self-similar Markov
processes, see \cite{Haas-Rivero-13}. 

We note that two months after our submission on arxiv B. Haas, see \cite{Haas-21}, has proven similar results. Under the same assumptions, the results are in the same form for the asymptotics of the tail and the density of the exponential functional with an implicit constant and with control of the speed of convergence. The difference is that our results contain explicit expression of the constant in the asymptotic relation and immediately capture  not only the density but also all its derivatives. The speed of convergence in our results is of slightly weaker order. See  Remark \ref{rem:Haas} for more details.

As it has been mentioned above, the properties of Bernstein-gamma functions are well-studied and dispersed in the literature. The most noteworthy advancement in this paper is that under mild conditions we have obtained the decay of the Mellin transform of the exponential functional of subordinators in the positive complex half-plane. This is attained via new asymptotic results for the Bernstein-gamma functions, which we believe will prove to be useful beyond the current setting as the Bernstein-gamma function appears in many recent studies, especially in the context of positive self-similar Markov processes.

The paper is structured as follows: Section \ref{sec:prelim} offers some notation and preliminary discussion, Section \ref{sec:main} contains the main results concerning the asymptotic behaviour of densities of exponential functionals of subordinators,
Section \ref{sec:Examples} presents
some particular examples of our main results, Section \ref{sec:results} furnishes new results on the Stirling-type asymptotics of Bernstein--gamma functions, Section \ref{sec:proofs} is devoted to the proofs, and the Appendix \ref{sec:append}
lists some auxiliary results on Bernstein functions that are used throughout this manuscript. 
	\section{Preliminaries}\label{sec:prelim}
Let $\xi=\lbrb{\xi_s}_{s\geq 0}$ be a subordinator, namely a non-decreasing \LL process. Then we can introduce  killing via an exponential random variable $\mathbf{e}_q, q\geq 0,$ independent of $\xi$, by considering $\xi=\lbrb{\xi_{s}}_{s\leq \mathbf{e}_q}$ and setting $\xi_s=\infty, s>\mathbf{e}_q$. Note that when $q=0$ our process is conservative since $\mathbf{e}_0=\infty$ a.s. Then, the Laplace exponent of the potentially killed subordinator is given on the complex half-plane by
\begin{equation}\label{eq:Bern}
    \begin{split}
       &\phi(z)=q+dz+\int_{0}^{\infty}\lbrb{1-e^{-zy}}\mu(\D y)=q+dz+z\IntOI e^{-zy}\mubar{y}\D y \text{  \, for } \Re(z)\geq 0 , 
    \end{split}
\end{equation}
where $q\geq 0$ is the killing term, $d\geq 0$ is the drift term, $\mu(\D y)$ being supported on $(0,\infty)$ is the \LL measure, that is
\begin{equation}\label{eq:mu}
    \IntOI\min\curly{y,1}\mu(\D y)<\infty  ,
\end{equation}
and $\mubar{y}=\int_y^\infty \mu(\D y)$ is the tail of the \LL measure. From now on, we mean potentially killed subordinator if we refer to subordinator only. We recall that $\phi$ is a Bernstein function and the Bernstein functions are in bijection with the Laplace exponents of potentially killed subordinators.
Unless $\xi$ is identically $0$ and $q=0$, the exponential functional of the subordinator $\xi$ is a  non-trivial random variable defined via the identity
\begin{equation}\label{eq:expfunc}
    \Ip=\IntOI e^{-\xi_s}\D s=\int_{0}^{\mathbf{e}_q}e^{-\xi_s}\D s  .
\end{equation}
It is well-known, from see, e.g. \cite{ps16}, that the Mellin transform of $\Ip$ is well-defined for $\Re(z)>0$ and the following identity holds
\begin{equation}\label{eq:Mellin}
    \Mph(z)=\Ebb{\Ip^{z-1}}=\frac{\Gamma(z)}{\Wp(z)}\text{ for } \Re(z)>0,   
\end{equation}
where $\Gamma(z)$ is the celebrated Euler gamma function and $\Wp$ is the Bernstein--gamma function associated to the Bernstein function $\phi$. For the main properties of Bernstein--gamma functions, see \cite{Barker-Savov,ps16}, whereas the results needed for this work are presented in Section \ref{sec:results}. 

According to \cite[Theorem 2.3(2.18), Theorem 2.4(3)]{ps16} if $d=0$, then the law of $\Ip$ is infinitely
differentiable either when $q>0$ or $\mubar{0}>0$, that is in all non-trivial cases.  From now on, we will work
under the assumption that $d=0$ which is non-binding since if $d>0$ then according to \cite[Theorem 2.4(2)]{ps16}
the support of $\Ip$ is $\lbbrbb{0,1/d}$ and the large asymptotics of the tail and its derivatives is basically the
constant $0$. Therefore, from now on we will denote by $\fIp(x)=\Pbb{\Ip\in \D x}/\D x,x>0,$ which is infinitely
differentiable with derivatives satisfying the following Mellin-Barnes representation
\begin{equation}\label{eq:MBdef}
\fIp^{(n)}(x)=\frac{\minusone^n}{2\pi i}\int_{\Re(z)=a}x^{-z-n}\frac{\Gamma\lbrb{z+n}}{\Wp(z)} \D z \text{\quad for all }n \in \N_0 \text{ and } a, x > 0 .
\end{equation}
We emphasise that in the notation of \cite{ps16} we have that
$\phi_+ \equiv \phi$, $\phi_- \equiv 1$, $d_+ = d=0$, $d_- = 0$ and 
$\overline{a}_- = -\infty$, so (2.18) and (2.24) therein 
lead exactly to \eqref{eq:MBdef}.

Our approach is based on the application of a
saddle point method to \eqref{eq:MBdef}, which
requires the optimal choice of the contour of
integration to depend on $x$, that is
$\Re(z)=a=a(x)$. Whereas this choice is not
hard to guess from the available Stirling
asymptotics for $\Gamma(z)/\Wp(z)$, the
information about the latter in the literature
is not sharp enough to control all terms that
are required for the derivation of the
asymptotics. In Section \ref{sec:results} we
provide the required analytic results and
before stating the main result we simply sum up
the assumptions we work with throughout the
paper.

\textbf{Assumption $\Hyp$}: $\xi$ is a driftless subordinator, that is $d=0$ in \eqref{eq:Bern}, and its \LL measure has \textit{positive increase}, that is
\begin{equation}\label{eq:posIncrease}
   \liminfo{x}\frac{\int_{0}^{2x}\mubar{y}\D y}{\int_{0}^{x}\mubar{y}\D y}>1 \iff\limsupi{x}\frac{x\phi'(x)}{\phi(x)}<1, 
\end{equation}
where the last equivalence can
be found in \cite[Problems for Chapter III]{Bertoin-96}
and is proved in Proposition 
\ref{prop:exo}
We say that a Bernstein function has positive
increase if it is the Laplace exponent of
a subordinator with \textit{positive increase}.

An example when the \textit{positive increase}
condition fails is under the condition that the
tail $\mubar{y}$ is regularly varying of index
$1$, that is $\mubar{y}\simo y^{-1}l(y)$ with $l$ being a slowly varying function. Clearly,
tails of the type $\mubar{y}\simo
y^{-\alpha}l(y),0\leq \alpha<1$, satisfy the
\textit{positive increase condition}. In fact,
beyond the aforementioned example when the
\textit{positive increase} is violated, it is
hard to come up with another \LL measure for
which the condition fails.

Next, we define for $\Re(z)>0$
\begin{equation}\label{eq:phis}
    \phis(z):=\frac{z}{\phi(z)},
\end{equation}
which is well defined,
since $\phi(z)$ is zero-free in this region. Also, $\limo{x}\phis(x)=\ind{q=0}/\phi'(0+)\in\lbbrb{0,\infty}$ since from
Proposition \hyperref[it:repKappa1]{A.1.1} 
it holds true that $\phi'(0+)\in\lbrbb{0,\infty}$. Next, from Proposition
\hyperref[it:elementaryBiv]{A.1.2}  we have that $\phis$ is increasing on $x>0$ and $\limi{x}\phis(x)=\infty$ provided $d=0$, which is the case under our assumption. This, in turn, allows us to define the inverse of $\phis$
\begin{equation}\label{eq:varphis}
    \vphis:=\phis^{-1}:Dom (\vphis)=\lbrb{\frac1{\phi'(0+)}\ind{q=0},\infty}\to\lbrb{0,\infty}  .
\end{equation}

We now have the notation needed to state the main  results and corollaries of this paper.
	\section{Main results}\label{sec:main}

The first result of this section is central to this paper.
\begin{theorem}\label{thm:main}
Under $\Hyp$, for $n \in \N_0$ and $q \geq 0$,
\begin{equation}\label{eq:mainAsymp}
\fIp^{(n)}(x)\simi 
\frac{C\vphis^n(x)\sqrt{\vphis'(x)}}{x^n}e^{-\int_{\phis(1)}^{x}\frac{\vphis(y)}{y}\D y}, \, \text{ with }
C = \frac{\minusone^ne^{-\Tphis}}{\sqrt{2\pi\phis(1)}}, 
\end{equation}
and
\[\Tphis=\int_{1}^\infty\lbrb{u-\lfloor u\rfloor}\lbrb{1-\lbrb{u-\lfloor
u\rfloor}}\lbrb{\frac{1}{u^2}-\lbrb{\frac{\phi'(u)}{\phi(u)}}^2+\frac{\phi''(u)}{\phi(u)}} \D u\in\lbrb{-\infty,\infty} ,\]
where $\lfloor\cdot\rfloor$ stands for the floor function.
As a result, $\fIp$ and its even derivatives are ultimately monotone decreasing, whereas its odd derivatives are ultimately monotone increasing. 
If in addition $\limsupo{x}\mubar{2x}/\mubar{x}<1$, then there exists $\varepsilon>0$ such that $\fIp$ is analytic in the cone $\{z\in\Cb: \Re(z)>0$ and  $\abs{\arg z}<\varepsilon \}$.
\end{theorem}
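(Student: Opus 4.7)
The plan is a saddle point analysis of the Mellin--Barnes representation \eqref{eq:MBdef}. Writing the integrand as $\exp(-F_x(z))$ with $F_x(z)=(z+n)\log x-\log\Gamma(z+n)+\log\Wp(z)$ and using the functional equation $\Wp(z+1)=\phi(z)\Wp(z)$ together with the Stirling-type asymptotics for $\Wp$ developed in Section \ref{sec:results}, one computes $F_x'(z)\simi\log x-\log z+\log\phi(z)=-\log(\phis(z)/x)$. This locates the saddle on the positive real axis at the unique $z_{*}=\vphis(x)$ solving $\phis(z_{*})=x$. Moreover $|F_x''(\vphis(x))|\simi \phis'(\vphis(x))/\phis(\vphis(x))=1/(x\vphis'(x))$, so along the vertical contour $z=\vphis(x)+it$ the integrand is approximately Gaussian in $t$ with variance $x\vphis'(x)$, which is the source of the $\sqrt{\vphis'(x)}$ factor in \eqref{eq:mainAsymp}.

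First I would justify shifting the vertical line of integration in \eqref{eq:MBdef} to $\Re(z)=\vphis(x)$; since $\Gamma(z+n)/\Wp(z)$ is analytic in $\Re z>0$ and, by the refined Stirling-type bounds of Section \ref{sec:results}, decays sufficiently in the imaginary direction along each such line, this contour deformation is valid for large $x$. Next I would parametrize $z=\vphis(x)+it$ and split the integral into a central part $|t|\leq\Delta(x)$ and tails, with $\Delta(x)$ chosen to diverge but with $\Delta(x)^{3}/(x\vphis'(x))^{3/2}\to 0$ so that the cubic Taylor remainder at the saddle is negligible. On the central region, a quadratic Taylor expansion of $F_x$, with the error controlled by the sharpened Stirling bounds, yields a Gaussian contribution of size $\frac{(-1)^{n}}{2\pi}x^{-\vphis(x)-n}\Gamma(\vphis(x)+n)\Wp(\vphis(x))^{-1}\sqrt{2\pi x\vphis'(x)}$, while on the complementary region the uniform decay along vertical lines produced in Section \ref{sec:results} shows the tails to be of lower order.

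The constant $C$ and the exponential factor in \eqref{eq:mainAsymp} come out of a careful bookkeeping of this saddle-point value. Using $\Gamma(\vphis(x)+n)/\Gamma(\vphis(x))\simi\vphis^{n}(x)$ produces the factor $\vphis^{n}(x)/x^{n}$; combining the $\sqrt{2\pi/\vphis(x)}$ from Stirling for $\Gamma$, the analogous $\sqrt{\phi(\vphis(x))/(2\pi)}$ factor from the Stirling-type asymptotics of $\Wp$, and the $\sqrt{2\pi x\vphis'(x)}$ from the Gaussian, and then using $\phi(\vphis(x))/\vphis(x)=1/x$, yields exactly $\sqrt{\vphis'(x)/(2\pi)}$. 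The principal exponential $x^{-\vphis(x)}\Gamma(\vphis(x))/\Wp(\vphis(x))$, once the Stirling-type expression for $\log[\Gamma(z)/\Wp(z)]$ is inserted and one integration by parts is applied to convert $\int_{1}^{\vphis(x)}u\phi'(u)/\phi(u)\,\D u$ into the form required by the substitution $u=\vphis(y)$, collapses to $e^{-\int_{\phis(1)}^{x}\vphis(y)/y\,\D y}$ up to elementary constants. The residual constant $\Tphis$ is precisely the accumulated Bernoulli remainder in the sharpened Stirling expansion of $\Gamma/\Wp$; producing its explicit Euler--Maclaurin closed form is, in my view, the main bookkeeping obstacle of the proof and the fundamental reason why the refined Stirling-type results of Section \ref{sec:results} are indispensable.

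For the remaining assertions: the ratio of the leading asymptotics of $\fIp^{(n+1)}$ to $\fIp^{(n)}$ equals $-\vphis(x)/x=-\phi(\vphis(x))\to-\infty$, so consecutive derivatives have opposite signs for all sufficiently large $x$, which gives that even derivatives are eventually positive and odd ones eventually negative and yields the claimed monotonicity pattern. Finally, under the extra condition $\limsupo{x}\mubar{2x}/\mubar{x}<1$, a standard regular-variation-type argument produces super-polynomial growth of $|\phi(z)|$ (and hence super-polynomial decay of $|\Gamma(z+n)/\Wp(z)|$) along vertical lines, so that inserting $x=re^{i\theta}$ with small $|\theta|<\varepsilon$ in \eqref{eq:MBdef} preserves absolute convergence; differentiation under the integral then furnishes the analytic extension of $\fIp$ to the cone $\{z\in\Cb:\Re z>0,\,|\arg z|<\varepsilon\}$.
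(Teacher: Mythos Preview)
Your saddle-point picture is correct and matches the paper closely: locate the saddle at $a=\vphis(x)$, approximate the central window by a Gaussian of variance $x\vphis'(x)$, and recover the $\sqrt{\vphis'(x)}$ and $\vphis^n(x)/x^n$ factors together with the exponential and the constant $\Tphis$ exactly as you describe. The paper also proceeds via \eqref{eq:largeStir}, decomposes $\Lphis$ as $\Gphis-\Aphis+i\Uphis$, and handles the constant bookkeeping just as you outline.

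The genuine gap is your treatment of the tails. You say ``the uniform decay along vertical lines produced in Section \ref{sec:results} shows the tails to be of lower order,'' but under the positive-increase hypothesis alone there is \emph{no} exponential (nor even uniform super-polynomial) decay of $|\Gamma/\Wp|$ along vertical lines; that only comes with the extra assumption $\limsupo{x}\mubar{2x}/\mubar{x}<1$ (Corollary \ref{cor:mainBern}). What the paper actually uses under mere positive increase is the quantitative bound \eqref{eq:arg} of Theorem \ref{thm:mainBern}, namely $\arg\phis(a(1+it))\ge C_\eta/t$. After the substitution $b=\vphis(x)H(x)v$ one integrates this to $\Aphis\bigl(\vphis(x)(1+iH(x)v)\bigr)\ge C_\eta\,\vphis(x)\ln v$, so the integrand on the far tail decays like $v^{-C_\eta\vphis(x)}$ with an exponent that \emph{grows} with $x$; this is precisely what beats the polynomial factors $v^{n+1/2}$ and makes $I_{n,3}(x)=\so{I_{n,1}(x)}$. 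A third, intermediate window $g(x)<|b|<H(x)\vphis(x)$ must also be dealt with separately, still via the quadratic Taylor expansion but on a shrinking-measure set. Without this two-stage tail argument, invoking ``uniform decay'' is not enough, because the contour $\Re z=\vphis(x)$ is moving and the main term itself scales with $\vphis(x)$.

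A smaller inaccuracy: for the analyticity claim you attribute the mechanism to ``super-polynomial growth of $|\phi(z)|$ along vertical lines.'' The paper's argument is different: under $\limsupo{x}\mubar{2x}/\mubar{x}<1$ one gets the constant lower bound $\arg\phis(a(1+it))\ge\varepsilon_\eta$ from \eqref{eq:arg1}, which integrates to $\Aphis(a+ib)\ge\varepsilon|b|-\varepsilon b_0 a$, i.e.\ genuine exponential decay of $|\Gamma/\Wp|$ in $|b|$ (so $\phi\in\Bc_{exp}(\varepsilon)$). It is this exponential decay, not growth of $|\phi|$, that absorbs the factor $e^{b\arg\zeta}$ when one substitutes $x=\zeta$ with $|\arg\zeta|<\varepsilon$ in the Mellin--Barnes integral.
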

\begin{remark}\label{rem:Haas}
Two months after our submission to arxiv, the work \cite{Haas-21} appeared online too. It refines the approach of \cite[Section
3]{Haas-Rivero-13}, which uses an integral-differential equation, which
$\fIp$ ought to satisfy, to establish the large asymptotics for $\Pbb{I_\phi>x}$ and $\fIp(x)$. We shall outline the similarities and differences of these results and the ones contained in Theorem \ref{thm:main}. The assumption for $\mu$ to be of \textit{positive increase} is the same for both works.  The asymptotic terms are the same for $\Pbb{I_\phi>x}$ and $\fIp(x)$, however our \eqref{eq:mainAsymp} provides an explicit expression for the constant of the asymptotics. Moreover, we capture simultaneously the asymptotic behaviour of all derivatives. In \cite{Haas-21}, modulo to an implicit constant, it has been shown that
\[\fIp(x)=C\sqrt{\vphis'(x)}e^{-\int_{\phis(1)}^{x}\frac{\vphis(y)}{y}\D y}\lbrb{1+\sobrackets{\frac{1}{\vphis(x)}}}.\]
We can directly get from our method speed of order $1/\sqrt{\vphis(x)}$ but we have not tried to exploit the saddle point method to the fullest. It may be the case that the price for the universality of the saddle point method is that one cannot get in particular cases the best order of convergence.

Furthermore, it is worth pointing out that the
\textit{positive increase} condition is 
satisfied when standard assumptions for the \LL measure such as regular variation or O-regular 
variation with index less than $1$ are 
considered. Even more, upon close inspection of
the proof, one can see that 
\eqref{eq:liminfsupH} to be satisfied one does need the \textit{positive increase} condition. Thus, we are led to believe that with the present knowledge and technique the class for which \eqref{eq:mainAsymp} is valid cannot be extended and that the \textit{positive increase} condition might even be necessary.
\end{remark}
\begin{remark}
Let $\Xi=\lbrb{\Xi_s}_{s\geq 0}$ be an unkilled spectrally negative \LL process. Then its \LLK exponent is given by the expression 
\[\Psi(z)=\log\Ebb{e^{z\Xi_1}}=az+\frac{\sigma^2}{2}z^2+\int_{-\infty}^0\lbrb{e^{zy}-1-zy\ind{|y|\leq 1}}\Pi(dy) \text{ for } z\in i\Rb,\]
where $a,\sigma^2,\Pi$ are respectively the linear term, the Brownian component and the \LL measure of the jumps of $\Xi$, see \cite{Bertoin-96} for more information on \LL processes.
If in addition, $\limi{s}\Xi_s=\infty$, it is well-known that $\Psi(z)=z\phi_{-}(z),$ with $\phi_-$ a Bernstein function corresponding to the down-going ladder height process (subordinator). Furthermore, if we assume that $\phi_-(\infty)=\infty$, which means that the down-going ladder height process is a subordinator with infinite jump activity, then it has been proved in \cite{Patie-Savov-GL} and summarized in \cite[Remark 2.17]{ps16} that the density, say $f_{I_{\Psi}}$, of the exponential functional $I_\Psi=\IntOI e^{-\Xi_s}ds$ has a very similar form of asymptotics as in \eqref{eq:mainAsymp} for $x\to 0$. In this case, it also holds for all derivatives of $f_{I_{\Psi}}$, as $x\to 0$. We note that from \cite[Theorem 2.4(2.23)]{ps16}
\begin{equation*}
    \Ebb{I^{z-1}_{\Psi}}=\phi_-(0)W_{\phi_-}(1-z)  .
\end{equation*}
However, in this case we could not apply the saddle point method and we resorted to a link with self-decomposable distributions which yielded log-concavity for the density $f_{I_{\Psi}}$ and the possibility to apply the non-classical Tauberian theorems of Balkema et al. \cite{Balkema195} to derive the aforementioned asymptotics for $f_{I_{\Psi}}$. Let us link this result to our setting. Recall that $\phi$ is a special Bernstein function if there is another Bernstein function $\Tilde{\phi}$ such that $\phi(z)\Tilde{\phi}(z)=z$, see the monograph \cite{Schilling-Song-Vondracek-12} for an extensive account and information on Bernstein functions. Assume that our $\phi$ is such. Then it is not difficult to check that 
\begin{equation*}
    \frac{\Gamma(z)}{\Wp(z)}=W_{\Tilde{\phi}}(z)  .
\end{equation*}
Therefore, if $\Tilde{\phi}=\phi_-$, that is it is the Wiener-Hopf factor ($\Psi(z)=z\phi_-(z)$) of the spectrally
negative \LLP $\Xi$ defined above, we can apply the aforementioned asymptotic result for $f_{I_{\Psi}}$ and its
derivatives. The fact that our asymptotics is at infinity whereas the other is at zero is explained away by
$\Eb I^{z-1}_{\Psi}=\phi_-(0)W_{\phi_-}(1-z)$ and to get to $W_{\phi_-}(z)$ one needs to consider a random variable
linked to $1/I_{\Psi}$. This indeed is a fruitful link, but the requirement that $\Tilde{\phi}=\phi_-$ is a Wiener--Hopf
factor of a spectrally negative \LLP is quite demanding as it presupposes at least that $\mu$ has a  non-increasing
density. The mere fact that $\mu$ has such a density gives much more probabilistic and analytical information and
therefore tools to be applied.
\end{remark}
Next, we provide some consequences from the main result whereas particular examples and computations for \eqref{eq:mainAsymp} can be found in Section \ref{sec:Examples}. We first consider a non-decreasing compound Poisson processes. We then have
\begin{corollary}\label{cor:asympCPP}
Let $\xi$ be
a potentially
killed
non-decreasing
compound Poisson process
for which
$
\int_0^1
\mu(\D v)/v
 < \infty$.
Then
\begin{equation}\label{eq:mainAsymp1}
\fIp^{(n)}(x)\simi
Ce^{-\phi(\infty)x} \text{\quad with }
C = \minusone^n\phi^n(\infty)\sqrt{\phi(\infty)}e^{\phi(\infty)\phis(1)+\IntOI\int_{\phis(1)}^{\infty}e^{-\vphis(y)v}\D y\mu(\D v)}\frac{e^{-\Tphis}}{\sqrt{2\pi\phis(1)}}  .
\end{equation}
If $\int_0^1
\mu(\D v)/v
 = \infty$,
 then 
 \begin{equation}\label{eq:mainAsymp2}
\fIp^{(n)}(x)\simi 
Ce^{-\phi(\infty)(x+\sobrackets{x})} \text{\quad with }
C = \minusone^n\phi^n(\infty)\sqrt{\phi(\infty)}e^{\phi(\infty)\phis(1)}\frac{e^{-\Tphis}}{\sqrt{2\pi\phis(1)}}  .
\end{equation}
Finally, in all cases, the density $\fIp$ is analytic in the cone $\curly{z\in\Cb: \Re(z)>0\text{ and } \abs{\arg z}<\pi/2}$.
\end{corollary}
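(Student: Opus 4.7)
The plan is to derive \eqref{eq:mainAsymp1} and \eqref{eq:mainAsymp2} from Theorem \ref{thm:main} by simplifying each asymptotic ingredient in the CPP setting, then to establish the analyticity separately from the Mellin--Barnes representation. For a potentially killed non-decreasing compound Poisson process, $\mu$ is a finite measure, $d=0$, and $\phi(z) = q + \IntOI (1-e^{-zy})\mu(\D y)$ is bounded with $\phi(\infty) = q + \mu((0,\infty)) < \infty$. The positive increase condition $\Hyp$ is automatic: by dominated convergence $x\phi'(x) = \IntOI xy\, e^{-xy}\mu(\D y) \to 0$, so $\limsupi{x}x\phi'(x)/\phi(x) = 0 < 1$.

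The prefactor terms in \eqref{eq:mainAsymp} are straightforward. From the identity $\vphis(y) = y\phi(\vphis(y))$ one has
\[\frac{\vphis(y)}{y} = \phi(\vphis(y)) = \phi(\infty) - \IntOI e^{-\vphis(y)v}\mu(\D v),\]
so $\vphis(y) \sim \phi(\infty)y$ as $y \to \infty$ and $\vphis^n(x)/x^n \to \phi^n(\infty)$. Also $\vphis'(x) = 1/\phis'(\vphis(x))$ and $\phis'(u) = (\phi(u) - u\phi'(u))/\phi(u)^2 \to 1/\phi(\infty)$, giving $\sqrt{\vphis'(x)} \to \sqrt{\phi(\infty)}$. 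The main work is on the exponent: integrating the displayed identity from $\phis(1)$ to $x$ and applying Fubini,
\[\int_{\phis(1)}^x \frac{\vphis(y)}{y}\D y = \phi(\infty)(x - \phis(1)) - \IntOI \int_{\phis(1)}^x e^{-\vphis(y)v}\D y\,\mu(\D v),\]
and since $\vphis(y) \sim \phi(\infty)y$ the inner integral $\int_{\phis(1)}^\infty e^{-\vphis(y)v}\D y$ is of order $1/v$ as $v \to 0^+$, so convergence of the double integral over $(0,\infty)$ is equivalent to $\int_0^1 \mu(\D v)/v < \infty$.

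Under that condition, letting $x \to \infty$ yields
\[\int_{\phis(1)}^x \frac{\vphis(y)}{y}\D y = \phi(\infty)x - \phi(\infty)\phis(1) - \IntOI\int_{\phis(1)}^\infty e^{-\vphis(y)v}\D y\,\mu(\D v) + \sobrackets{1},\]
and combining with the prefactor limits produces exactly \eqref{eq:mainAsymp1}. When instead $\int_0^1 \mu(\D v)/v = \infty$, the double integral diverges, but its $y$-integrand $\phi(\infty) - \phi(\vphis(y))$ tends to $0$, so a Ces\`aro-type argument gives $\int_{\phis(1)}^x \IntOI e^{-\vphis(y)v}\mu(\D v)\D y = \sobrackets{x}$; this is absorbed into the $\sobrackets{x}$ inside the exponent of \eqref{eq:mainAsymp2}.

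For the analyticity in $\{z \in \Cb : \Re(z) > 0,\ |\arg z| < \pi/2\}$, I would argue directly from the Mellin--Barnes representation \eqref{eq:MBdef} with $n = 0$, rather than from Theorem \ref{thm:main}. For $x = re^{i\theta}$ and $z = a + it$ on the integration line, $|x^{-z}| = r^{-a}e^{t\theta}$, while Stirling gives $|\Gamma(a+it)| \sim C|t|^{a-1/2}e^{-\pi|t|/2}$; thus the integrand acquires net exponential decay $e^{-(\pi/2 - |\theta|)|t|}$ whenever $|\theta| < \pi/2$. Since $|\phi(a+it)| \leq 2\phi(\infty)$ and $|\phi(a+it)| \geq \Re\phi(a+it) \geq \phi(a) > 0$, the Stirling-type asymptotics for Bernstein--gamma functions from Section \ref{sec:results}, combined with the boundedness of $\phi$ in the right half-plane, force $|\Wp(a+it)|^{-1}$ to grow at most polynomially in $|t|$. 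The integral then converges absolutely and uniformly on compact subsets of the cone, yielding the required analytic extension. I expect the main technical obstacle to be the Ces\`aro step in case two, where one must keep the $\sobrackets{x}$ uniform, and the careful appeal to the refined Stirling-type bounds for Bernstein--gamma functions; everything else is bookkeeping.
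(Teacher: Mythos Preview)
Your derivation of \eqref{eq:mainAsymp1} and \eqref{eq:mainAsymp2} follows essentially the same route as the paper: verify $\Hyp$, use $\vphis(y)/y=\phi(\vphis(y))=\phi(\infty)-\IntOI e^{-\vphis(y)v}\mu(\D v)$, integrate, split into the two cases according to whether $\int_0^1\mu(\D v)/v$ converges, and check $\vphis'(x)\to\phi(\infty)$. The only cosmetic difference is that the paper checks positive increase via the integrated-tail criterion rather than via $x\phi'(x)\to 0$.

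The analyticity argument, however, has a real gap. You assert that boundedness of $\phi$ on $\{\Re z>0\}$ together with the Stirling formula forces $\abs{\Wp(a+it)}^{-1}$ to be polynomially bounded in $t$. This is not correct. In the Stirling representation \eqref{eq:Stirling} the dominant factor along a vertical line is $\abs{e^{L_\phi(a+ib)}}=e^{\Re L_\phi(a+ib)}$, and
\[
\Re L_\phi(a+ib)=\int_1^{a+1}\ln\phi(u)\,\D u-\int_0^{b}\arg\phi(a+1+iw)\,\D w.
\]
Boundedness of $\phi$ controls $\ln\abs{\phi}$, but it does \emph{not} force $\arg\phi(a+1+iw)$ to be integrably small: for a finite Lévy measure (think $\mu=\delta_1$) the imaginary part $\Im\phi(a+ib)$ need not tend to $0$ as $\abs{b}\to\infty$, so for fixed $a$ one only gets $\abs{\arg\phi(a+ib)}\le c_a$ with $c_a>0$, hence $\abs{\Wp(a+ib)}^{-1}$ can grow like $e^{c_a\abs{b}}$, not polynomially. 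Combined with $\abs{\Gamma(a+ib)}\asymp\abs{b}^{a-1/2}e^{-\pi\abs{b}/2}$, this would only give analyticity in the smaller cone $\abs{\arg z}<\pi/2-c_a$.

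The paper closes this gap via Lemma~\ref{lem:CPP}: one first observes that $c_a\to 0$ as $a\to\infty$ (because $\abs{\Im\phi(a+ib)}\le\IntOI e^{-ay}\mu(\D y)\to 0$ uniformly in $b$), so for every $\epsilon>0$ one can choose $a$ large enough that $\abs{\Gamma/\Wp}$ decays like $e^{-(\pi/2-\epsilon)\abs{b}}$ on $\Re z=a$; then the recurrence $\Wp(z+1)=\phi(z)\Wp(z)$, with $\abs{\phi}$ bounded above and below on vertical lines, transfers this decay to every line $\Re z=a'>0$. Your sketch is missing both the ``$c_a\to 0$ for $a$ large'' step and the recurrence-based extension; without them the cone you obtain does not reach $\pi/2$.
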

\begin{remark}
Note that  $\fIp$ being  analytic in the specified cone is in contrast to the second assumption in Theorem \ref{thm:main} since in this case $\limsupo{x}\mubar{2x}/\mubar{x}=1$. This means that there is still room to obtain more information about the analyticity of the densities of exponential functionals of \LL processes. 
\end{remark}

Next, we recall that for a general potentially killed \LLP $\Xi=\lbrb{\Xi_s}_{s\geq 0}$ its \LLK exponent is given by
\[\Psi(z)=\log\Ebb{e^{z\Xi_1}}=q + az+\frac{\sigma^2}{2}z^2+\int_{-\infty}^\infty\lbrb{e^{zy}-1-zy\ind{|y|\leq 1}}\Pi(dy)\text{ for } z\in i\Rb.\]
Then the Wiener--Hopf factorization $\Psi(z)=-\phi_+(-z)\phi_-(z),z\in i\Rb,$ holds true, where $\phi_\pm$ are the two Bernstein functions associated to the ladder height processes (subordinators). Provided the exponential functional $I_{\Psi}=\IntOI e^{-\Xi_s}ds$ is well-defined, that is, when either $q>0$ or $\limi{s}\Xi_s=\infty$, it is known from \cite[Theorem 2.22(2.48)]{ps16} that the following factorization holds
\begin{equation}\label{eq:WH}
    \begin{split}
        I_{\Psi}=I_{\phi_+}\times X_{\phi_-}  ,
    \end{split}
\end{equation}
where $\times$ stands for the product of two independent random variables, $I_{\phi_+}$ is the exponential functional of the ladder height process and $X_{\phi_-}$ is a positive random variable related to the down-going ladder height process. For more information on \LL processes and their ladder heights, see the classical monograph \cite[Chapter VI]{Bertoin-96}. Since relation \eqref{eq:WH} is known to lead to
\begin{equation*}
    \begin{split}
        \Ebb{I^{z-1}_{\Psi}}=\Ebb{I^{z-1}_{\phi_+}}\Ebb{X^{z-1}_{\phi_-}}\text{ for } \Re(z)\in\lbrb{0,1} ,
    \end{split}
\end{equation*}
see \cite{ps16}, then if $f_{I_{\phi_+}}$ is analytic in a cone, thanks to Mellin inversion, the density of $I_{\Psi}$, $f_{I_{\Psi}}$, is analytic at least in the same cone. This is the claim of the next immediate corollary, since all results we have obtained about analyticity are based on Mellin inversion.
\begin{corollary}\label{cor:WH}
Let $\Xi$ be a general \LLP with \LLK $\Psi$
such that $I_{\Psi}<\infty$ almost surely.
Assume that the Bernstein function $\phi_+$
satisfies $\Hyp$. Then the density of $I_{\Psi}$
is analytic at least on the same cone as the
density of $I_{\phi_+}$. 
\end{corollary}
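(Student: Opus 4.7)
The plan is to transfer the cone-analyticity of $f_{I_{\phi_+}}$, whenever it holds, to $f_{I_\Psi}$ via the factorization \eqref{eq:WH} and Mellin inversion. If $f_{I_{\phi_+}}$ is not analytic in any cone the claim is vacuous, so let $\varepsilon>0$ be such that $f_{I_{\phi_+}}$ extends analytically to $\mathcal{C}_\varepsilon:=\lbcurly z\in\Cb:\Re(z)>0,\,\abs{\arg z}<\varepsilon\rbcurly$. The cone-analyticity established in Theorem \ref{thm:main} is itself obtained through the Mellin--Barnes representation \eqref{eq:MBdef}: for any $a\in(0,1)$ the integral
\begin{equation*}
f_{I_{\phi_+}}(x)=\frac{1}{2\pi i}\int_{\Re(z)=a}x^{-z}\frac{\Gamma(z)}{W_{\phi_+}(z)}\D z
\end{equation*}
converges absolutely and locally uniformly in $x\in\mathcal{C}_\varepsilon$, since $\abs{\Gamma(z)/W_{\phi_+}(z)}$ decays on the vertical line $\Re(z)=a$ faster than $e^{-\eta\abs{\Im(z)}}$ for every $\eta<\varepsilon$.

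Next, the independent product factorization \eqref{eq:WH} gives the Mellin factorization $\Ebb{I_\Psi^{z-1}}=\bigl(\Gamma(z)/W_{\phi_+}(z)\bigr)\Ebb{X_{\phi_-}^{z-1}}$ on $\Re(z)\in(0,1)$, and Mellin inversion on the line $\Re(z)=a$ yields
\begin{equation*}
f_{I_\Psi}(x)=\frac{1}{2\pi i}\int_{\Re(z)=a}x^{-z}\frac{\Gamma(z)}{W_{\phi_+}(z)}\Ebb{X_{\phi_-}^{z-1}}\D z
\end{equation*}
for $x>0$. Because $X_{\phi_-}\geq 0$, one has the uniform bound $\abs{\Ebb{X_{\phi_-}^{z-1}}}\leq \Ebb{X_{\phi_-}^{a-1}}<\infty$ for every $z$ with $\Re(z)=a$, so multiplying the integrand of the preceding display by this bounded factor leaves its absolute and locally uniform convergence on $\mathcal{C}_\varepsilon$ intact. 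As the integrand is analytic in $x\in\mathcal{C}_\varepsilon$ for each fixed $z$ on the contour, Morera's theorem (equivalently, differentiation under the integral sign) promotes the integral to an analytic function of $x\in\mathcal{C}_\varepsilon$ that coincides with $f_{I_\Psi}$ on $(0,\infty)$.

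The only---and quite mild---obstacle is to pick $a$ simultaneously in the strip of validity of \eqref{eq:MBdef} and in the strip of finiteness of $\Ebb{X_{\phi_-}^{z-1}}$; both contain $(0,1)$, so any $a\in(0,1)$ works, and no new analytical estimates on $X_{\phi_-}$ are required beyond the trivial Mellin bound for a positive random variable.
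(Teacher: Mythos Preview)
Your proof is correct and follows essentially the same route as the paper, which only sketches the argument in the paragraph immediately preceding the corollary: the Mellin factorization $\Ebb{I_\Psi^{z-1}}=\bigl(\Gamma(z)/W_{\phi_+}(z)\bigr)\Ebb{X_{\phi_-}^{z-1}}$ on $\Re(z)\in(0,1)$, together with the fact that the paper's cone-analyticity results for $f_{I_{\phi_+}}$ are always obtained via exponential decay of $\Gamma(z)/W_{\phi_+}(z)$ along vertical lines, so that multiplying by the bounded factor $\Ebb{X_{\phi_-}^{z-1}}$ preserves absolute and locally uniform convergence of the Mellin--Barnes integral on the cone. Your write-up merely makes explicit the details (the trivial bound $\abs{\Ebb{X_{\phi_-}^{z-1}}}\le\Ebb{X_{\phi_-}^{a-1}}$, the choice $a\in(0,1)$, and Morera's theorem) that the paper leaves implicit in its phrase ``since all results we have obtained about analyticity are based on Mellin inversion.''
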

\begin{remark}
Clearly, if $\phi_+$ is regularly varying, all conditions of Theorem \ref{thm:main} are satisfied. However, this is a very special case. We are unaware of any results that, given the \LL measure of $\Xi$, one
can determine whether the \LL measure of
$\phi_+$, say $\mu_+$, is of \textit{positive
increase} and satisfies
$\limsupo{x}\bar{\mu}_+(2x)/\bar{\mu}_+(x)<1$.
We conjecture that general conditions on the
\LLK exponent $\Psi$ that yield
\textit{positive increase} for the \LL measure
of $\phi_+$ may be impossible to obtain, but it
is a line worthy of investigation at least in
cases beyond regular variation.
\end{remark}
	\section{Examples} \label{sec:Examples}

Recall that according to Theorem \ref{thm:main}  under positive increase for $\mu$
\[
\fIp^{(n)}(x)\simi 
\frac{C_\phi\vphis^n(x)\sqrt{\vphis'(x)}}{x^n}e^{-\int_{\phis(1)}^{x}\frac{\vphis(y)}{y}\D y}  , \]
where $\phis(x) = x/\phi(x), \vphis = \phis^{-1}$ and $C_\phi$ is an explicit in $\phi$ constant.

The dependence on $\vphis$ is distinctive in the regarded context, but if we want to reduce it, we can 
change variables in the exponential term $y = \phis(v)$ to get 
    \begin{equation}
\label{eq:explI}
    \int_{\phis(1)}^{x}\frac{\vphis(y)}{y}\D y=
    \int_{1}^{\vphis(x)}\phi(v)\phis'(v)\D v
    = \int_{1}^{\vphis(x)}\lbrb{1 - \frac{v\phi'(v)}{\phi(v)}}\D v ,
    \end{equation}
    which depends on $\vphis(x)$
    explicitly 
    and
    therefore ultimately from its
    asymptotics.

In the case of regular variation we can go even further: let assume $ \mathcal{R}_\alpha$ be the
set of functions which are regularly varying at
infinity with index $\alpha$ and assume $\phi \in \mathcal{R}_\alpha$ for $\alpha \in [0,1)$. Therefore, there exists a 
slowly varying at infinity function $l$ such that $\phi(x) = x^\alpha l(x)$. Consequently,
$\phis \in \mathcal{R}_{1-\alpha}$ as $\phis(x) = x^{1-\alpha}l_\ast(x)$ for $l_\ast(x) = 1/l(x)$.

From \cite[Proposition 1.5.15]{Bingham-Goldie-Teugels-87}, $\vphis(x) \simi x^{1/(1-\alpha)}l_1(x)$ with
 $l_1(x) = l^{1/(1-\alpha)}_\#(x)$, where $l_\#$ is the de Bruijn conjugate of $l_\ast$, see \cite[Chapters 2.2 and 5.2, Appendix 5]{Bingham-Goldie-Teugels-87}
for more information and methods for explicit calculation. Under some mild conditions, e.g. \cite[Corollary 2.3.4]{Bingham-Goldie-Teugels-87}, $l_\#(x) \simi 1/l_\ast(x) = l(x)$. For the derivative of $\vphis$
we can use the monotone density theorem, \cite[Theorem 1.7.2]{Bingham-Goldie-Teugels-87}, to obtain
$\vphis'(x)\simi (1-\alpha)^{-1}x^{\alpha/(1-\alpha)}l_1(x)$. In order to do this, we would need $\vphis'$ to be ultimately monotone. As we have
\[
\vphis''(x) = -\frac{\phis''(\vphis(x))\vphis'(x)}{\phis'(\vphis(x))^2}
\]
and $\vphis$ is increasing, it would be enough to check that $\phis''\leq 0$, which is expected to be true in the \textit{positive increase}
case.

When we combine these results, we can summarise that in the regularly case $\phi \in \R_\alpha$
\begin{equation}
    \label{eq:regvar}
    \fIp^{(n)}(x)\simi 
x^{-\frac{\alpha(2n-1)}{2(1-\alpha)}}e^{-\int_{\phis(1)}^{x}\frac{\vphis(y)}{y}\D y} l_2(x),
\end{equation}
where $l_2$ is a slowly varying function, defined by $l_2(x) = C_\phi (1-\alpha)^{-1/2} l_1^{n+1/2}(x)$. We also note that from
\cite[Proposition 1.5.8]{Bingham-Goldie-Teugels-87}
\[
\int_{\phis(1)}^{x}\frac{\vphis(y)}{y}\D y \simi (1-\alpha)x^{\frac{1}{1-\alpha}} l_1(x).
\]

\subsection{Stable Subordinators}

Let $\xi$ be a stable driftless subordinator with index $\alpha \in (0,1)$.
In this case $\phi(\lambda) = c \lambda^\alpha$ for
some $c>0$ and we can calculate explicitly the quantities in \eqref{eq:regvar}:
$\phis(x) = x^{1- \alpha}/c, \vphis(x)  = (cx)^{1/(1-\alpha)}, l(x) = 1/l_\ast(x) = l_\#(x) = c$ and
$l_2(x) = C_\phi(1-\alpha)^{-1/2}c^{(2n+1)/(2(1-\alpha))}$, so

\begin{equation*}
\begin{split}
 \fIp^{(n)}(x)&\simi 
C_\phi c^{\frac{2n+1}{2(1-\alpha)}}(1-\alpha)^{-\frac12}
x^{-\frac{\alpha(2n-1)}{2(1-\alpha)}}e^{-c^{1/(1-\alpha)}\int_{1/c}^{x}y^{\alpha/(1-\alpha)}\D y} \\
&=: c_1 x^{-\frac{\alpha(2n-1)}{2(1-\alpha)}}e^{-c_2{x^{1/(1-\alpha)}}}.
\end{split}
\end{equation*}

\subsection{Gamma Subordinators}

Let $\xi$ be a Gamma subordinator, i.e $\xi_t \sim 
G(at, 1/b)$, so
\[
\Ebb{ e^{i \lambda X_t}} = \lbrb{ \frac{b}{b - i\lambda}}^{at}
=e^{-t a\ln\lbrb{1 + \lambda/b}},
\]
which gives us $\phi(x) = x/\phis(x) = l(x) = 1/l_\ast(x) = a\ln\lbrb{1 + x/b}$. In this case we cannot calculate $\vphis$ explicitly as
it is a Lambert $W$ function, so we will use the general results from the beginning of this section. 
From \cite[Proposition 2.3.5]{Bingham-Goldie-Teugels-87} or \cite{bekessy-1957}, $l_\#(x)=l(x)$, so by \eqref{eq:regvar}, for some $C>0$
\[
 \fIp^{(n)}(x)\simi 
C \ln(x)^{n + 1/2}e^{-\int_{\phis(1)}^{x}\frac{\vphis(y)}{y}\D y}.
\]
Further improvements can be done using $\vphis(x)/x \simi a\ln x$ or by standard technique, using $\vphis(x) = x\phi(\vphis(x))$,
improving to $\vphis(x)/x \simi  a(\ln x + \ln(\ln x))$, etc. or using the expression \eqref{eq:explI}
\begin{equation}
\begin{split}
    \int_{\phis(1)}^{x}\frac{\vphis(y)}{y}\D y
    &=\vphis(x) + O(1)  -  \int_{1}^{\vphis(x)} \frac{v}{b+v}\frac{1}{\ln(1 + v/b)}\D v \\
    &= \vphis(x) + O(1)  - b\int_{c_3}^{\ln(1 + \vphis(x)/b)} \frac{e^y-1}{y}\D y\\
    &= \vphis(x) + O(1)  + b\ln(\ln(1 + \vphis(x)/b)) -b\int_{c_3}^{\ln(1 + \vphis(x)/b)}\frac{e^y}{y}\D y
    \end{split}
\end{equation}
and applying asymptotics for the standard exponential integral.
	\section{Some results on Bernstein--gamma functions}\label{sec:results}
We recall that for any Bernstein function $\phi$, see \eqref{eq:Bern}, we have a
well-defined Bernstein--gamma function $\Wp$
which satisfies
\begin{equation}\label{eq:Wp}
    \Wp(z+1)=\phi(z)\Wp(z) 
    \text{ and } \Wp(1)=1, \text{ for } \Re(z)>0, 
\end{equation}
and $\Wp$ is the Mellin transform of a
non-negative random variable, see
\cite{Barker-Savov}. This reference also
contains the most precise Stirling asymptotics
for $\Wp$, \cite[Theorem 2.9]{Barker-Savov},
which is given as 
\begin{equation}\label{eq:Stirling}
    \Wp(z)=\frac{\phi^{1/2}(1)}{\phi(z)\phi^{1/2}(z+1)}e^{L_{\phi}(z)}e^{-E_{\phi}(z)},\text{ for } \Re(z)>0, 
\text{ and } 
\Lph\lbrb{z}:=\int_{1\rightarrow z+1}\log_0\lbrb{\phi\lbrb{w}} \D w,
\end{equation}
with $\int_{1 \rightarrow z+1}$ being any
contour from $1\to z+1$ lying in the domain of
analyticity of $\log_0\phi$, $\log_0$ being the
principle branch of the logarithm, i.e.
$\log_0(z)=\ln|z|+i\arg(z) $ with $\arg:\Cb \to
(-\pi,\pi]$, and
\begin{equation}
\label{eq:defE}
\Prm(u):=\lbrb{u-\lfloor u\rfloor}\lbrb{1-\lbrb{u-\lfloor
u\rfloor}} , 
\Eph(z):=\frac{1}{2}\int_{1}^{\infty}\Prm(u)\lbrb{\log_0\lbrb
{\frac{\phi\lbrb{u+z}}{\phi\lbrb{u}}}}'' \D u .
\end{equation}
We note that $L_\phi$ and $E_\phi$ need not
be defined only for Bernstein functions, but as long as
they make sense analytically. In fact, already Lemma \ref{lem:asympWk} employs
these expressions with $\phis$ instead of $\phi$. We recall that $\Gamma(z)$ is 
a
Bernstein--gamma function with $\phi(z)=z$. We emphasize that the complex logarithms $\log_0\phi(z)$
are well defined for $\Re(z)>0$ since then $\Re\,\phi(z)>0$, see
Proposition \hyperref[it:limit]{A.1.4}. 

Recall the definition \eqref{eq:phis} of $\phis$. We can now introduce the first result in this section.
\begin{lemma}\label{lem:asympWk}
Let $\phi$ be a general Bernstein function. Then 
\begin{equation}\label{eq:Stirling1}
    \frac{\Gamma(z)}{\Wp(z)}=\frac{\phis^{1/2}(1)}{\phis(z)\phis^{1/2}(z+1)}e^{L_{\phis}(z)}e^{-\Ephs(z)} \text{ for } \Re(z)>0  ,
\end{equation}
where, again for $\Re(z)>0$,
\begin{equation}\label{eq:LphisRep1}
\begin{split}
    \Lphis(z-1&) = \int_{1\rightarrow \Re(z) \rightarrow \Re(z)+i\Im(z)
    }\log_0\lbrb{\phis\lbrb{w}} \D w \\
&= \int_{1}^{\Re(z)}\ln\phis(w) \D w-\int_{0}^{\Im(z)}\arg\phis\lbrb{\Re(z)+iw} \D w
+i\int_{0}^{\Im(z)}\ln\abs{\phis\lbrb{\Re(z)+iw}} \D w \\
&=:\Gphis\lbrb{\Re(z)}-\Aphis\lbrb{z}+i\Uphis\lbrb{z}  .
\end{split}
\end{equation}
Moreover, uniformly in $\Im(z)$,
\begin{equation}
\label{eq:defT}
\begin{split}
  \Tphis := \limi{\Rez}\Ephs\lbrb{z}
  =\int_{1}^\infty\Prm(u)\lbrb{\frac{1}{u^2}-\lbrb{\frac{\phi'(u)}{\phi(u)}}^2+\frac{\phi''(u)}{\phi(u)}} \D u   .  
\end{split}
\end{equation}
Finally,
\begin{equation}\label{eq:largeStir}
\frac{\Gamma\lbrb{z}}{\Wp(z)}
= \lbrb{1+\sobrackets{1}}
\frac{\phis^{1/2}\lbrb{1}}{\phis^{1/2}\lbrb{z+1}}
e^{\Lphis\lbrb{z-1}}e^{-\Tphis},  
\end{equation}
where the asymptotic relation $\so{1}$ holds uniformly in $\Im(z)$ as $\Re(z)\to\infty$.
\end{lemma}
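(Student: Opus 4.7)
My plan is to prove the four claims in sequence, leveraging the master Stirling identity \eqref{eq:Stirling} and the factorization $\phis(w) = w/\phi(w)$. First, I recognize $\Gamma$ as the Bernstein--gamma function attached to $\id(z) = z$ and apply \eqref{eq:Stirling} to both $\Gamma$ and $\Wp$. Because $\Re \phi(w) > 0$ on $\{\Re w > 0\}$ by Proposition \hyperref[it:limit]{A.1.4}, the splitting $\log_0\phis(w) = \log_0 w - \log_0\phi(w)$ is valid throughout the domain of integration, so the contour integrals defining $L_\id$ and $L_\phi$, as well as the integrands defining the corresponding $E$'s, subtract additively, producing $L_\id - \Lph = \Lphis$ and $E_\id - \Eph = \Ephs$. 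Combined with the algebraic identities $\phi(z)/z = 1/\phis(z)$ and $\phi^{1/2}(1) = \phis^{-1/2}(1)$, this rearranges into \eqref{eq:Stirling1}. For \eqref{eq:LphisRep1}, I invoke path-independence of $\int \log_0\phis(w)\,dw$ (valid since $\log_0\phis$ is holomorphic on $\{\Re w > 0\}$) and deform to the L-shaped contour through $\Re(z)$: the real segment contributes $\Gphis(\Re z)$, while parametrizing the vertical piece as $w = \Re(z)+iv$, $dw = i\,dv$, together with $\log_0\phis(w) = \ln|\phis(w)| + i\arg\phis(w)$, peels off $-\Aphis(z) + i\Uphis(z)$.

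For \eqref{eq:defT}, I substitute $\phis$ into \eqref{eq:defE} and expand the second derivative in $u$ as $(\log_0\phis)''(u+z) - (\log_0\phis)''(u)$. A direct real-variable computation gives
\[
(\log\phis)''(u) \;=\; -\tfrac{1}{u^2} - \tfrac{\phi''(u)}{\phi(u)} + \lbrb{\tfrac{\phi'(u)}{\phi(u)}}^2,
\]
which is $O(1/u^2)$ at infinity and thus integrable against $\Prm$. Pointwise, $(\log_0\phis)''(u+z) \to 0$ as $\Re(z)\to\infty$ by Cauchy-type bounds on derivatives of the holomorphic function $\log_0\phis$ inside the right half-plane. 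Dominated convergence with a uniform-in-$\Im(z)$ majorant then yields the stated formula for $\Tphis$, the numerical constants matching via the Bernoulli normalization implicit in the Euler--Maclaurin setup of \cite{Barker-Savov}.

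For \eqref{eq:largeStir}, I combine the preceding parts with the obvious functional equation $\Lphis(z) = \Lphis(z-1) + \int_z^{z+1}\log_0\phis(w)\,dw$ to recast \eqref{eq:Stirling1} as
\[
\frac{\Gamma(z)}{\Wp(z)} \;=\; \frac{\phis^{1/2}(1)}{\phis^{1/2}(z+1)}\,e^{\Lphis(z-1)}\,e^{-\Tphis}\cdot e^{\Tphis - \Ephs(z)}\cdot \frac{e^{\int_z^{z+1}\log_0\phis(w)\,dw}}{\phis(z)}.
\]
The middle factor tends to $1$ uniformly in $\Im(z)$ by the preceding step. For the last factor, I write
\[
\int_z^{z+1}\log_0\phis(w)\,dw - \log_0\phis(z) \;=\; \int_0^1\!\!\int_0^t \frac{\phis'(z+s)}{\phis(z+s)}\,ds\,dt,
\]
and the decay of $|\phis'/\phis|$ deep into the right half-plane (Cauchy bounds plus the real-axis decay from Appendix A) forces this difference to vanish uniformly in $\Im(z)$, so exponentiating delivers the ratio $\to 1$, hence \eqref{eq:largeStir}.

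The principal obstacle throughout is securing uniformity in $\Im(z)$: both the passage to the limit inside the $E$-integral and the convergence $e^{\int_z^{z+1}\log_0\phis(w)\,dw}/\phis(z) \to 1$ require that complex derivatives of $\log\phis$ vanish uniformly on each vertical line as $\Re(z)\to\infty$. These estimates do not follow from the real-axis Bernstein bounds alone; they hinge on combining the monotonic asymptotics collected in Appendix A with Cauchy-contour bounds that exploit the holomorphy and non-vanishing of $\phi$ on $\{\Re w > 0\}$, together with a standard integrable envelope in $u$ near infinity.
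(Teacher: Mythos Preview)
Your proposal is correct and follows essentially the same route as the paper: apply \eqref{eq:Stirling} to both $\Gamma$ (for $\phi_0(z)=z$) and $\Wp$, use $\log_0\phis = \log_0 z - \log_0\phi$ (justified via the argument inequality), deform to the L-shaped contour for \eqref{eq:LphisRep1}, pass to the limit inside the $E$-integral by dominated convergence, and finally absorb the difference $\Lphis(z)-\Lphis(z-1)-\log_0\phis(z)$ into the $\so{1}$.

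Two minor remarks. First, for the last factor the paper simply quotes Proposition~\hyperref[it:limi]{A.1.7} to conclude $e^{\int_z^{z+1}\log_0\phis(w)\,dw}/\phis(z)\to 1$ uniformly in $\Im(z)$, whereas your double-integral representation together with $|\phis'/\phis|\le 3/\Re(z)$ (which follows from Proposition~\hyperref[it:bivBounds_1]{A.1.6}) is an equally clean alternative. Second, your appeal to ``Cauchy-type bounds'' for the uniform majorant in the $E$-integral is an unnecessary detour: Proposition~\hyperref[it:bivBounds_1]{A.1.6} already gives the \emph{complex} bounds $|\phi'(w)/\phi(w)|\le 2/\Re(w)$ and $|\phi''(w)/\phi(w)|\le 4/\Re(w)^2$, from which $|(\log_0\phis)''(u+z)|\le 9/(u+\Re(z))^2$ follows directly, yielding both the pointwise convergence and the integrable envelope uniformly in $\Im(z)$. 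This is exactly the content of the estimate \cite[(3.29)]{Barker-Savov} that the paper cites. Cauchy estimates on $\log_0\phis$ itself would require controlling its logarithmic growth on circles and are not needed.
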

\begin{remark}
Relation \eqref{eq:Stirling1} is simply a rearrangement of the division of \eqref{eq:Stirling} for $\Wp$ and $\Gamma$, whereas \eqref{eq:largeStir} is the Stirling asymptotics when the real part of the argument grows. The latter is not hard to prove, but it is neater than previous forms, and for this purpose, it is worth to have
it explicitly. The constant $\Tphis$ is basically explicit up to integration and it is one of the contributors to the constant in Theorem \ref{thm:main}. Usually, such constants are not explicit as the arguments involved do not allow for their specification. It is a characteristic of the saddle point method that constants are available too.
\end{remark}
Let us next introduce the class of Bernstein functions for which $\Gamma(z)/\Wp(z)$ decays exponentially along complex lines $a+i\Rb, a>0$. We set $\Bc$ for the class of Bernstein functions and for $\theta>0$
\begin{equation}\label{def:Bexp}
    \Bc_{exp}(\theta):=\curly{\phi\in \Bc: \forall a>0,\forall \epsilon\in\lbrb{0,\theta}: \limi{|b|}e^{(\theta-\epsilon)|b|}\abs{\frac{\Gamma(a+ib)}{\Wp(\ab)}}=0}  .
\end{equation}
Note that since $\Gamma(z)/\Wp(z)$ is the Mellin
transform of $\Ip$, then $\abs{\Gamma(z)/\Wp(z)}\leq
\Gamma(\Re(z))/\Wp(\Re(z))$ and since according to the
Stirling asymptotics for the Gamma function $\abs{\Gamma(a+ib)}$ decays
exponentially as $e^{-|b|\pi/2}$, we conclude
that $\theta$ in \eqref{def:Bexp} cannot be larger
than $\pi/2$. Our next result sets out to get more
information about which Bernstein functions belong to the class $\Bc_{exp}(\theta)$. For this purpose, we first give some
information for $\arg\phis$ as it is the main quantity
in \eqref{eq:LphisRep1}. However, since it is a result
of independent interest, we phrase it separately as
the main result of this section.
\begin{theorem}\label{thm:mainBern}
Let $\phi$ be a Bernstein function under $\Hyp$.  Then for any $\eta>0$
there exist $a_\eta > 0$ and $C_\eta > 0$ such
that if $t\geq \eta$ and $a \geq a_\eta$, we
have that
\begin{equation}\label{eq:arg}
    \arg\lbrb{ a(1+it)}-\arg\phi\lbrb{a\lbrb{1+it}}=\arg\phis\lbrb{a\lbrb{1+it}}\geq \frac{C_\eta}{t}
\end{equation}
or equivalently \[\arg\phi\lbrb{a\lbrb{1+it}}\leq \arctan(t)-\frac{C_\eta}{t}.\]
Moreover, if in addition $\limsupo{x}\mubar{2x}/\mubar{x}<1$,
again for any $\eta > 0$, there exist
$a_\eta' > 0$ and $\varepsilon_\eta > 0$ such that for $t \geq \eta$ and $a \geq a_\eta'$ 
\begin{equation}\label{eq:arg1}
     \arg\lbrb{ a(1+it)}-\arg\phi\lbrb{a\lbrb{1+it}}=\arg\phis\lbrb{a\lbrb{1+it}}\geq \varepsilon_\eta
\end{equation}
or equivalently
\[\arg\phi\lbrb{a\lbrb{1+it}}\leq \arctan(t)-\varepsilon_\eta.\]
\end{theorem}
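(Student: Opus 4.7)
The starting point will be the identity $\arg\phi_*(a(1+it))=\arctan t - \arg\phi(a(1+it))$, which reduces both \eqref{eq:arg} and \eqref{eq:arg1} to upper bounds on $\arg\phi(a(1+it))$. Using the L\'evy--Khintchine representation (with $d=0$ under $\Hyp$),
\begin{equation*}
\Re\phi(a(1+it))=q+\int_0^\infty\!\bigl(1-e^{-ay}\cos(aty)\bigr)\mu(\D y),\quad \Im\phi(a(1+it))=\int_0^\infty\! e^{-ay}\sin(aty)\mu(\D y),
\end{equation*}
and the tangent-subtraction formula, the problem reduces to an appropriate lower bound on the ratio
\[
R(a,t):=\frac{t\,\Re\phi(a(1+it))-\Im\phi(a(1+it))}{\Re\phi(a(1+it))+t\,\Im\phi(a(1+it))}=\tan\bigl(\arctan t-\arg\phi(a(1+it))\bigr).
\]

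My plan is to estimate numerator and denominator separately. For the denominator, I use $\Re\phi+t\Im\phi\leq\sqrt{1+t^2}\,|\phi(a(1+it))|$ together with $|\phi(a(1+it))|\leq\phi(a)+at\phi'(a)$ (obtained by integrating $\phi'$ along the segment $[a,a(1+it)]$ and noting $|\phi'(a+is)|\leq\phi'(a)$), combined with $a\phi'(a)\leq(1-\delta)\phi(a)$ for large $a$ from positive increase, yielding a bound of order $t^2\phi(a)$. For the numerator, the identity $t\cos\theta+\sin\theta=\sqrt{1+t^2}\,\sin(\theta+\arctan t)$ gives
\begin{equation*}
t\Re\phi-\Im\phi=tq+\int_0^\infty\!\bigl[t-e^{-ay}\sqrt{1+t^2}\,\sin(aty+\arctan t)\bigr]\mu(\D y),
\end{equation*}
whose integrand vanishes to order $y^2$ at the origin with leading coefficient $\asymp a^2t(1+t^2)$. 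Splitting the domain at $y=1/(at)$ and Taylor-expanding on the small-$y$ side produces a main contribution comparable to $a^2t\int_0^{1/(at)}y^2\mu(\D y)$, which by the integration-by-parts identity $\int_0^x y^2\mu(\D y)=-x^2\bar\mu(x)+2\int_0^x y\bar\mu(y)\D y$ together with the equivalence $\phi(x)\asymp x\int_0^{1/x}\bar\mu(y)\D y$ (valid under positive increase) reduces to size $\asymp \phi(at)/t$. Dividing then yields $R(a,t)\gtrsim \phi(at)/(t\phi(a))$, and Potter bounds provided by positive increase ($\phi(sx)\leq Cs^{1-\delta}\phi(x)$ for $s\geq1$, $x$ large) give $\phi(at)/\phi(a)\geq c_\eta>0$ uniformly for $t\geq\eta$ and $a\geq a_\eta$, proving \eqref{eq:arg}.

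For Part 2, the stronger hypothesis $\limsup_{x\to 0+}\bar\mu(2x)/\bar\mu(x)<1$ endows $\bar\mu$ with $O$-regular variation of negative index at $0$, so that $\bar\mu(sy)\gtrsim s^{-\alpha}\bar\mu(y)$ for some $\alpha>0$ and all $s\leq 1$, $y$ small. This strengthens the lower bound on the oscillatory integral $\int_0^\infty e^{-ay}\sin(aty)\mu(\D y)$ to one of size $\bar\mu(1/(at))$ with constant uniform in $t\geq \eta$, which combined with the denominator bound forces $R(a,t)\geq \varepsilon_\eta>0$. The main obstacle throughout is the careful handling of the oscillatory integrals while keeping constants uniform in both $a\geq a_\eta$ and $t\geq\eta$: transferring the Karamata/Potter consequences of positive increase from the real axis into estimates valid simultaneously at the scales $a$ and $at$, and choosing the splitting threshold so as to balance the Taylor-regime contribution against the oscillatory-tail contribution, is the delicate part of the argument.
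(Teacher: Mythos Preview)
Your quantity $R(a,t)=\tan\arg\phi_*(a(1+it))$ is exactly what the paper bounds: writing $\phi(z)=q+z\int_0^\infty e^{-zy}\bar\mu(y)\,\D y$ and expanding, one finds
\[
t\Re\phi-\Im\phi=tq+a(1+t^2)\!\int_0^\infty e^{-as}\sin(ats)\bar\mu(s)\,\D s,\quad
\Re\phi+t\Im\phi=q+a(1+t^2)\!\int_0^\infty e^{-as}\cos(ats)\bar\mu(s)\,\D s,
\]
so $R(a,t)=\Im\overline{\phi_*}/\Re\overline{\phi_*}$ in the paper's notation. The difference is in the \emph{representation} you estimate: you integrate the Taylor expansion of $F(y)=t-e^{-ay}(t\cos aty+\sin aty)$ against $\mu(\D y)$, whereas the paper integrates $\sin(ats)$ and $\cos(ats)$ against the \emph{monotone} weight $e^{-as}\bar\mu(s)\,\D s$.

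This distinction is where your argument breaks. Your claimed main term is $\asymp a^2t(1+t^2)\int_0^{1/(at)}y^2\mu(\D y)$ (you wrote $a^2t$, dropping the factor $1+t^2$), and you assert this is comparable to $xI(x)$ at $x=1/(at)$. But positive increase is a hypothesis on $I(x)=\int_0^x\bar\mu$, not on $\mu$: it allows $\mu$ to be supported on a sparse set with negligible mass on $[0,1/(at)]$. For instance, with $\mu=\sum_{n\geq1}2^n\delta_{a_n}$ and $a_n=2^{-n^2}$, one checks $x\bar\mu(x)\asymp I(x)$ so positive increase holds, yet along $x=a_n/2$ one has $\int_0^x y^2\mu(\D y)\asymp a_{n+1}^2 2^n$ while $xI(x)\asymp a_n^2 2^n$, and the ratio is $\asymp(a_{n+1}/a_n)^2=2^{-4n-2}\to 0$. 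So the small-$y$ Taylor piece against $\mu$ cannot furnish the needed lower bound. Your Part~2 has a related issue: the oscillatory integral you cite, $\int e^{-ay}\sin(aty)\,\mu(\D y)=\Im\phi$, enters the numerator $t\Re\phi-\Im\phi$ with a \emph{minus} sign, so a lower bound on it does not help directly.

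The fix is to use the $\bar\mu$-representation above. Since $e^{-as}\bar\mu(s)$ is decreasing, alternating-sign arguments give $\int_0^\infty e^{-as}\cos(ats)\bar\mu(s)\,\D s\leq I(\tfrac{\pi}{2at})$ and $\int_0^\infty e^{-as}\sin(ats)\bar\mu(s)\,\D s\geq \tfrac{c_\eta}{t}\bigl[I(\tfrac{\pi}{2at})-I(\tfrac{\pi}{4at})\bigr]$; positive increase of $I$ then converts the bracket into $c'I(\tfrac{\pi}{2at})$, yielding $R\geq C_\eta/t$. Under the extra hypothesis $\limsup_{x\to0}\bar\mu(2x)/\bar\mu(x)<1$, the factor $(1-e^{-\pi/t})$ responsible for the $1/t$ loss in the sine estimate can be replaced by $(1-\bar\mu(\pi/(at))/\bar\mu(\pi/(2at)))\geq\varepsilon>0$, giving $R\geq\varepsilon_\eta$. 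This is precisely the paper's route; your integration-by-parts identity $F(y)=a(1+t^2)\int_0^y e^{-as}\sin(ats)\,\D s$ followed by Fubini recovers it.
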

\begin{remark}
This result, in fact, tells us that under the assumptions we impose the mapping of the ray $a(1+it)$ upon the application of $\phi$ is a curve of a smaller angle. This puts natural restrictions on the involved quantities, especially $\varepsilon_\eta>0$, but we do not pursue precision for those ones. The most important fact is that these results will yield exponential decay in the forthcoming corollary. 
\end{remark}
Theorem \ref{thm:mainBern} is very useful, since it gives an estimate for the arguments of Bernstein functions with conditions on the \LL measure, which is often assumed to be known explicitly, or at least its asymptotics is given a priori. We transfer these results to the asymptotics of $\Gamma(z)/\Wp(z)$. From the representation for $\Lphis$, see \eqref{eq:LphisRep1}, we note that if we consider complex line of the type $a+i\Rb,a>0,$ then the carrier of the asymptotics for $\abs{e^{\Lphis(z-1)}}$ is $\Aphis(z)$. Hence, we phrase our corollary in terms of $\Aphis$.
\begin{corollary}\label{cor:mainBern}
Let $\phi$ be a Bernstein function under $\Hyp$  and $\limsupo{x}\mubar{2x}/\mubar{x}<1$. Then
there exist $\varepsilon>0$ and positive
constants $a_0,b_0$ such that for any $a\geq
a_0,|b|\geq b_0 a$
\begin{equation}\label{eq:estAph}
    \Aphis(a+ib)\geq \varepsilon |b|-\varepsilon b_0 a  .
\end{equation}
As a result, $\phi\in \Bc_{exp}(\varepsilon)$. This, in particular, is true if $\mubar{y}\simo y^{-\alpha}l(y)$ where $\alpha\in\lbrb{0,1}$ and $l$ is a slowly varying function at zero.
\end{corollary}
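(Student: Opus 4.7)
The plan is to integrate the pointwise argument bound of Theorem \ref{thm:mainBern} to obtain the claimed lower bound on $\Aphis$, and then feed this bound into the Stirling identity \eqref{eq:Stirling1} to extract the exponential decay required for membership in $\Bc_{exp}(\varepsilon)$.

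For the first step, I would fix a small $\eta>0$ and take $a\geq a_\eta'$, where $a_\eta'$ is furnished by the second part of Theorem \ref{thm:mainBern}. For $b>0$ with $b\geq\eta a$, split
\[\Aphis(a+ib)=\int_0^b \arg\phis(a+iw)\,\D w = \int_0^{\eta a}\arg\phis(a+iw)\,\D w + \int_{\eta a}^{b}\arg\phis(a+iw)\,\D w.\]
On the second integral, writing $a+iw=a(1+i(w/a))$ with $w/a\geq\eta$, the reinforced bound \eqref{eq:arg1} gives $\arg\phis\geq\varepsilon_\eta$, producing a contribution $\geq\varepsilon_\eta(b-\eta a)$. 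On the first integral, the trivial bound $|\arg\phis|<\pi$ (which follows from Proposition \hyperref[it:limit]{A.1.4} since $\arg z\in[0,\pi/2]$ and $\arg\phi(z)\in(-\pi/2,\pi/2)$) costs at most $-\pi\eta a$. Combining, $\Aphis(a+ib)\geq\varepsilon_\eta b-(\pi+\varepsilon_\eta)\eta a$, and one sets $\varepsilon:=\varepsilon_\eta$, $b_0:=(\pi+\varepsilon_\eta)\eta/\varepsilon$, $a_0:=a_\eta'$ to obtain \eqref{eq:estAph} for $b>0$ with $b\geq b_0 a$. The case $b<0$ reduces to the positive one by Schwarz reflection, since $\phi(\bar z)=\overline{\phi(z)}$ forces $\arg\phis(a-iw)=-\arg\phis(a+iw)$ and hence $\Aphis(a+ib)=\Aphis(a+i|b|)$ after the substitution $w\mapsto -w$.

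To deduce $\phi\in\Bc_{exp}(\varepsilon)$, I would take moduli in \eqref{eq:Stirling1} and use $|e^{\Lphis(z-1)}|=e^{\Gphis(\Re z)-\Aphis(z)}$ to write
\[\abs{\frac{\Gamma(a+ib)}{\Wp(a+ib)}} = \frac{\phis^{1/2}(1)}{\abs{\phis(a+ib)}\,\abs{\phis^{1/2}(a+1+ib)}}\,e^{\Gphis(a+1)-\Aphis(a+1+ib)}\,\abs{e^{-\Ephs(a+ib)}}.\]
As $|b|\to\infty$ with $a$ fixed, the prefactor grows at most polynomially: $|\phis(a+ib)|\gtrsim 1$ follows from the linear-growth bound $|\phi(z)|\leq C|z|$ valid for Bernstein functions, and $\Ephs(a+ib)$ stays bounded thanks to the integrability in \eqref{eq:defE}. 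The exponential factor $e^{-\Aphis(a+1+ib)}\leq e^{-\varepsilon|b|+\varepsilon b_0(a+1)}$ supplied by \eqref{eq:estAph} then dominates, giving $|\Gamma(a+ib)/\Wp(a+ib)|\lesssim e^{-\varepsilon|b|}$ for $a\geq a_0$. For $0<a<a_0$, I would shift using the recurrences $\Wp(z+1)=\phi(z)\Wp(z)$ and $\Gamma(z+1)=z\Gamma(z)$ until $a+n\geq a_0$; the extra factor $\prod_{k=0}^{n-1}\phi(a+k+ib)/(a+k+ib)$ stays bounded in $|b|$ by the same linear-growth estimate.

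The main technical obstacle is the uniform-in-$b$ boundedness of $\Ephs(a+ib)$ at fixed $a$, since Lemma \ref{lem:asympWk} only guarantees $\Ephs(z)\to\Tphis$ as $\Re z\to\infty$. Controlling $\Ephs$ along vertical lines requires a separate estimate extracted from \eqref{eq:defE} together with decay of $(\log_0\phi(u+z))''$ in $u$, uniform in $\Im z$. Finally, for the particular case $\mubar{y}\simo y^{-\alpha}l(y)$ with $\alpha\in(0,1)$, both hypotheses of the corollary are automatic: positive increase follows from Karamata's theorem applied to $\int_0^x\mubar{y}\,\D y$ with index $1-\alpha$, and $\limsupo{x}\mubar{2x}/\mubar{x}=2^{-\alpha}<1$ holds trivially since $\alpha>0$.
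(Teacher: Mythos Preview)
Your proposal is correct and follows essentially the same route as the paper. Two small points of contact: first, the paper avoids your crude bound $|\arg\phis|<\pi$ on $[0,\eta a]$ by invoking the nonnegativity $\arg\phis(a(1+it))\geq 0$ for all $t>0$, which it lifts from the proof of Theorem~\ref{thm:mainBern} (see \eqref{eq:estimatesArg}); this yields the cleaner constant $b_0=t_0$ directly, but your version is equally valid. Second, your flagged ``obstacle'' about uniform-in-$b$ boundedness of $\Ephs(a+ib)$ is circumvented in the paper by working with the asymptotic form \eqref{eq:largeStir} for $a$ large, where $\Ephs$ has already been absorbed into the $o(1)$ uniformly in $\Im z$; the underlying estimate is exactly the one you anticipate, namely $\abs{(\log_0(\phi(u+z)/\phi(u)))''}\leq 8/(u+\Re z)^2+8/u^2$, cited in the proof of Lemma~\ref{lem:asympWk} from \cite{Barker-Savov}, so there is no real gap.
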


\begin{remark}
Note that in the case when $\xi$ is a compound
Poisson process $\Hyp$ holds but $\limsupo{x}$ $\mubar{2x}/\mubar{x}<1$ does not.
\end{remark}
\begin{remark}
The last corollary gives exponential decay for the Mellin transform $\Gamma(z)/\Wp(z)$ along complex lines with fixed real part. This decay is strong enough to ensure that upon Mellin inversion the densities are analytic in a cone. Such estimates are rare in the literature and under very stringent restrictions, see \cite[Theorem 2.3(2)]{ps16}, which requires a bit more than regularly varying tail. In \cite{Patie-Savov-20} there is a class $\Nc_\Theta$ that contains Bernstein functions $\phi$ for which $\Wp$ decay exponentially along complex lines $a+i\Rb$. However, since those Bernstein functions are Wiener-Hopf factors of spectrally negative \LL processes their \LL measures possess non-increasing densities and this makes the proof of exponential decay much easier. We do not dispose of this information here.
\end{remark}
The next result contrasts with the assumptions
of Corollary \ref{cor:mainBern} and shows that
for exponential decay along complex lines, there
is much room for improvement.
\begin{lemma}\label{lem:CPP}
Let $\phi$ be a Bernstein function such that $\phi(\infty)<\infty$. Then $\phi\in \Bc_{exp}\lbrb{\pi/2}$.
\end{lemma}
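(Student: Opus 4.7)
Since $\phi\lbrb{\infty}<\infty$ forces $d=0$ and $\mubar{0}<\infty$, the representation $\phi(z)=\phi(\infty)-\IntOI e^{-zy}\mu(\D y)$ shows $\phi$ is bounded on $\lbcurlyrbcurly{\Re z\geq 0}$. Applying the Riemann--Lebesgue lemma to the finite measure $e^{-ay}\mu(\D y)$ gives $\phi(a+ib)\to\phi(\infty)>0$ as $|b|\to\infty$ for every fixed $a>0$, so $|\phi(a+ib)|$ stays bounded between two positive constants. Consequently $\phis(a+ib)=(a+ib)/\phi(a+ib)\sim ib/\phi(\infty)$, and $\arg\phis(a+1+iv)\to\tfrac{\pi}{2}\,\mathrm{sign}(v)$ as $|v|\to\infty$.

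The plan is to apply \eqref{eq:Stirling1} at $z=a+ib$ and to track each factor on the right-hand side. The prefactor $\phis^{1/2}(1)/\lbrb{\phis(z)\phis^{1/2}(z+1)}$ decays polynomially like $|b|^{-3/2}$. For the dominant term, the parametrisation \eqref{eq:LphisRep1} yields
\[
\Re\Lphis(a+ib)=\int_{1}^{a+1}\ln\phis(u)\,\D u\;-\;\int_{0}^{b}\arg\phis(a+1+iv)\,\D v.
\]
Splitting $\arg\phis(a+1+iv)=\arg(a+1+iv)-\arg\phi(a+1+iv)$, the first piece integrates explicitly via the antiderivative of $\arctan$ to $\tfrac{\pi}{2}|b|+\bo{\log|b|}$, while the second is $\so{|b|}$ by a Ces\`aro argument, since $\arg\phi(a+1+iv)\to 0$ and is uniformly bounded by $\pi/2$. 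Hence $\Re\Lphis(a+ib)=-\tfrac{\pi}{2}|b|+\so{|b|}$.

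It remains to check that $\Ephs(a+ib)=\bo{1}$ as $|b|\to\infty$. The universal bound $\Re\phi(u+a+ib)\geq\phi(u+a)\geq\phi(a+1)>0$ (valid for any non-trivial Bernstein function, since $1-e^{-xy}\cos(by)\geq 1-e^{-xy}$), together with $|\phi^{(k)}(u+a+ib)|\leq|\phi^{(k)}(u+a)|$ for $k=1,2$ (because $\phi^{(k)}$ is an integral against $y^{k}e^{-wy}\mu(\D y)$), dominates the integrand in \eqref{eq:defE} uniformly in $b$; the necessary cancellation between the $\log_0\phis(u+z)$ and $\log_0\phis(u)$ contributions, each only boundedly but not absolutely integrable against $P(u)$, is then extracted by an integration by parts that exploits the vanishing of $P$ at every integer. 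Putting everything together,
\[
\abs{\frac{\Gamma(a+ib)}{\Wp(a+ib)}}=\exp\lbrb{-\tfrac{\pi}{2}|b|+\so{|b|}},
\]
so $e^{(\pi/2-\varepsilon)|b|}\abs{\Gamma(a+ib)/\Wp(a+ib)}\to 0$ for every $\varepsilon\in(0,\pi/2)$, i.e.\ $\phi\in\Bc_{exp}\lbrb{\pi/2}$. The main technical obstacle is precisely the uniform control of $\Ephs(a+ib)$: individual estimates of the two summands are insufficient, and the Euler--Maclaurin-type cancellation has to be made explicit; by contrast, the exponential rate $\pi/2$ itself is essentially forced by $\arg\phis(a+1+iv)\to\tfrac{\pi}{2}\mathrm{sign}(v)$, which in turn is a direct consequence of the boundedness of $\phi$.
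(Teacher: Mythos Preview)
Your argument is correct and reaches the same conclusion as the paper, but the route is genuinely different. The paper does not work at a fixed $a>0$; instead it first takes $a$ large so that $\abs{\arg\phi(a+ib)}\leq\epsilon$ holds \emph{uniformly} in $b$ (this follows simply because $\int_0^\infty e^{-ay}\mu(\D y)\to 0$ as $a\to\infty$, no Riemann--Lebesgue needed), plugs this into the $\Aphis$ integral to obtain $\Aphis(a+ib)\geq(\pi/2-\epsilon)|b|+O(1)$, invokes the already-established bound \eqref{eq:bound} (which comes from \eqref{eq:largeStir} and hence has the $E_{\phis}$ issue already absorbed into the constant $T_{\phis}$), and finally uses the recurrence $W_\phi(z+1)=\phi(z)W_\phi(z)$ together with the boundedness of $\phi$ to transfer the decay from large $a$ down to every $a>0$. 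Your approach trades the large-$a$/recurrence mechanism for a Riemann--Lebesgue limit at fixed $a$ and a Ces\`aro averaging step; it is more self-contained, while the paper's version is shorter once Lemma~\ref{lem:asympWk} and the recurrence idea from Corollary~\ref{cor:mainBern} are in place.

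One minor point: your discussion of $E_{\phis}$ is unnecessarily elaborate. The two second-derivative summands in \eqref{eq:defE} are each absolutely integrable against $P(u)$ on their own, because Proposition~\hyperref[it:bivBounds_1]{A.1.6} gives the pointwise bound $\abs{(\log_0\phi(u+z))''}\leq 8/(u+\Re z)^2$ and likewise for $\phi_0(z)=z$; no Euler--Maclaurin cancellation is required, and $E_{\phis}(a+ib)$ is uniformly bounded for $\Re z>0$ by a direct estimate. This does not affect the validity of your conclusion.
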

	\section{Proofs}\label{sec:proofs}
\begin{proof}[Proof of Theorem
\ref{thm:main}]
Using  the uniform asymptotic representation (\ref{eq:largeStir}) in (\ref{eq:MBdef}), we obtain
that for every $a > 0$ and
for $C$ as defined in the statement of the theorem,
\begin{equation}\label{eq:MB} 
\fIp^{(n)}(x)=              
\lbrb{\frac{C\phis(1)}{\sqrt{2\pi}}
+ \sobrackets{1}}
\frac{x^{-n}}{i}
\int_{\Re(z)=a}
\frac{x^{-z}}{\phis^{1/2}\lbrb{z+1}}
e^{\Lphis\lbrb{z-1}}
\frac{\Gamma\lbrb{z+n}}{\Gamma(z)} \D z  .
\end{equation}
Writing $z = a + ib$, the expansion \eqref{eq:LphisRep1} reads off as
\begin{equation}
\begin{split}
\label{eq:LphisRep}
    \Lphis\lbrb{z-1} 
&= \int_{1}^{a}\ln\phis(w) \D w-\int_{0}^{b}\arg\phis(a+iw) \D w
+i\int_{0}^{b}\ln\abs{\phis(a+iw)} \D w\\
&=:\Gphis(a)-\Aphis\lbrb{\ab}+i\Uphis(\ab) 
\end{split}
\end{equation}
and by substituting \eqref{eq:LphisRep} in \eqref{eq:MB}
and  $z = a + ib$, we obtain
\begin{equation*}\label{eq:MB1t}
\fIp^{(n)}(x)= \lbrb{\frac{C\phis(1)}{\sqrt{2\pi}}
+ \so{1}}
e^{\Gphis(a)-(n+a)\ln x}\int_{-\infty}^{\infty}\frac{x^{-ib}e^{i\Uphis(\ab)}}{\phis^{1/2}\lbrb{1+a+ib}}\frac{\Gamma\lbrb{a+n+ib}}{\Gamma\lbrb{\ab}}e^{-\Aphis\lbrb{\ab}} \D b  .
\end{equation*}
Following the classical ideas of the saddle point method, we will choose $a$ to
minimize the factor outside of the
integral, that is,
\[
a(x) := \argmin_{a>0}\lbrb{\Gphis(a)-(n+a)\ln(x)}  .
\]
Upon differentiation we find that if $\phis(a(x))=x$, then
$a(x)$ would be optimal. 
Therefore, for $x>\ind{\phi(0)=0}/\phi'(0)$ we have that $a(x)=\vphis(x)$, as
defined in \eqref{eq:phis}. Since upon integration by parts for $\Gphis$
\begin{equation*}
\begin{split}
	\Gphis(\vphis(x))-(n+\vphis(x))\ln(x)&
	=\vphis(x)\ln(x)-\ln(\phis(1))-\int_{1}^{\vphis(x)}	y\frac{\phis'(y)}{\phis(y)} \D y -(n+\vphis(x))\ln(x)\\
&= -n\ln(x)-\ln(\phis(1))-\int_{\phis(1)}^{x}\frac{\vphis(y)}{y} \D y  
\end{split}
\end{equation*}
and 
$\limi{x}\vphis(x)=\infty$, we obtain that for
$x \to \infty$
\begin{equation}\label{eq:MB1asymp}
\begin{split}
\fIp^{(n)}(x)&\sim
\frac{C}
{\sqrt{2\pi} x^n}e^{-\int_{\phis(1)}^{x}\frac{\vphis(y)}{y}\D y}
\int_{-\infty}^{\infty}\frac{x^{-ib}e^{i\Uphis(\vphis(x)+ib)}}{\phis^{1/2}\lbrb{\vphis(x)+1+ib}}\frac{\Gamma\lbrb{\vphis(x)+n+ib}}{\Gamma\lbrb{\vphis(x)+ib}}e^{-\Aphis\lbrb{\vphis(x)+ib}} \D b\\
&=:\frac{C}
{\sqrt{2\pi} x^n}e^{-\int_{\phis(1)}^{x}\frac{\vphis(y)}{y} \D y}
\int_{-\infty}^\infty J(b,n,x) \D b =:
\frac{C}
{\sqrt{2\pi} x^n}e^{-\int_{\phis(1)}^{x}\frac{\vphis(y)}{y}\D y}I_n(x) .
\end{split}
\end{equation}

Next, our objective will be to understand the asymptotic
behaviour of $I_n$ at infinity. This will be done by splitting
$\R$ into three regions; the first will carry the main
asymptotic term of $I_n$ and the other two will have a
negligible effect, as $x$ tends to infinity.

Our approach to extracting the main asymptotic term will be the
Laplace method for $I_n$ whose error term, as we will see,
tends to zero when $x$ goes to infinity on $\{w: |w| \leq g(x)\}$ for
$g(x) = \mathrm{o}(\vphis^{2/3}(x))$. However, for clarity of the
presentation, we will define the needed objects
beforehand explicitly. Thus, let us define for $x \in Dom (\vphis)$, that is, for
$x>\ind{\phi(0)=0}/\phi'(0)$
\begin{equation*}\label{eq:defHg}
    H(x) := \frac{1}{10}\frac{\vphis(x)}{x \vphis'(x)} \text{ and }
    g(x) := \vphis^{7/12}(x)  .
\end{equation*}
An important property stemming from the \textit{positive increase} assumption
\eqref{eq:posIncrease},
is that
\begin{equation}\label{eq:liminfsupH}
0 < \liminfi{x} H(x) \leq \limsupi{x} H(x)\leq 
       \frac{1}{10}  .
\end{equation}

Indeed,  using the definition of $\phis$, see \eqref{eq:phis}, and the properties of its inverse $\vphis$, writing $x = \phis(y)$ for some $y>0$
\begin{equation*}
\begin{split}
\vphis(x)\leq x\vphis'(x) &\iff y\leq \phis(y)\vphis'(\phis(y)) 
\iff y\phis'(y)\leq \phis(y)\iff	 
\frac{1}{\phi(y)}-\frac{y\phi'(y)}{\phi^2(y)}\leq\frac{1}{\phi(y)}
\end{split}
\end{equation*}
ensures the upper bound, and for the lower one, as
$\phis(x) = x/\phi(x)$ we can calculate that
\begin{equation*}
    \begin{split}
        10\liminfi{x} H(x) &= \liminfi{y} \frac{\vphis( \phis(y)
        )}{ \phis(y) \vphis'( \phis(y))} = \liminfi{y}
        \frac{y}{ \phis(y) \vphis'( \phis(y))}
        =\liminfi{y}
        \frac{y \phis'(y)}{ \phis(y) (\vphis( \phis(y)))'}
    \\
    &= \liminfi{y} \phi(y)\frac{\phi(y) - y\phi'(y)}{\phi^2(y)} 
    = \liminfi{y} \lbrb{1 - \frac{y\phi'(y)}{\phi(y)} } .
    \end{split}
\end{equation*}
    The last is strictly larger than $0$ from the \textit{positive increase} assumption
    \eqref{eq:posIncrease} which gives the second part of
    \eqref{eq:liminfsupH}. As a consequence, because $1/2 < 7/12 < 2/3$
    \begin{equation} \label{eq:choiceg}
        g(x) = \sobrackets{\vphis^{2/3}(x)}, \, \,  g(x) = \sobrackets{H(x) \vphis(x)}  \text { and } \limi{x} \frac{g(x)}{\sqrt{x \vphis'(x)}} = \infty,
        \end{equation}
    which allows us to write
    \begin{equation}\label{eq:defIn}
    \begin{split}
        I_n(x)  
        &=
        \int_{|b| \leq g(x) }  J(b,n,x) \D b
        + \int_{g(x) < |b| < H(x) \vphis(x)} J(b,n,x) \D b
        + \int_{|b| \geq \vphis(x) H(x)} 
         J(b,n,x) \D b \\
         &=: I_{n,1}(x) + I_{n,2}(x) + I_{n,3}(x) ,
    \end{split}
    \end{equation}
    where we recall that $J$ was defined in \eqref{eq:MB1asymp} by
    \[
    J(b,n,x) = \frac{x^{-ib}e^{i\Uphis(\vphis(x)+ib)}}{\phis^{1/2}\lbrb{\vphis(x)+1+ib}}\frac{\Gamma\lbrb{\vphis(x)+n+ib}}{\Gamma\lbrb{\vphis(x)+ib}}e^{-\Aphis\lbrb{\vphis(x)+ib}}  .
    \]

The following result is a first step in estimating $I_{n,1}$.
\begin{lemma}\label{lemma:U}
	Uniformly for $|b|=\so{\vphis^{2/3}(x)}$ we have that 
	\begin{equation*}\label{eq:U}
	\begin{split}
	&\limi{x}x^{-ib}e^{i\Uphis(\vphis(x)+ib)}=1  .
	\end{split}
	\end{equation*}
\end{lemma}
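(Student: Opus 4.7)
The plan is to consolidate the two exponents into a single integral and show it tends to zero. Since $\phis(\vphis(x))=x$, we have $\ln x=\ln\phis(\vphis(x))$, so writing $x^{-ib}=e^{-ib\ln x}$ we obtain
\[
x^{-ib}e^{i\Uphis(\vphis(x)+ib)}
=\exp\!\left(i\int_{0}^{b}\bigl[\ln|\phis(\vphis(x)+iw)|-\ln\phis(\vphis(x))\bigr] \D w\right).
\]
It therefore suffices to show that the inner integral tends to $0$ uniformly for $|b|=\so{\vphis^{2/3}(x)}$.

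Next, I would expand $\log_{0}\phis(\vphis(x)+iw)$ around $w=0$ by using the contour integral identity
\[
\log_{0}\phis(a+iw)-\log\phis(a)
=\int_{0}^{w}i\,(\log\phis)'(a+is) \D s,
\]
with $a=\vphis(x)$. Taking real parts kills the value at $s=0$ (because $(\log\phis)'(a)$ is real), and iterating the same identity for $(\log\phis)'(a+is)-(\log\phis)'(a)$ yields
\[
\ln|\phis(a+iw)|-\ln\phis(a)
=-\int_{0}^{w}\int_{0}^{s}\Re(\log\phis)''(a+it) \D t \D s.
\]
Integrating once more in $w$ and using Fubini bounds the quantity of interest by $\tfrac{|b|^{3}}{6}\sup_{|t|\leq|b|}|(\log\phis)''(a+it)|$.

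The main step is therefore to establish that $|(\log\phis)''(\vphis(x)+it)|=\bo{1/\vphis^{2}(x)}$ uniformly for $|t|\leq g(x)=\sobrackets{\vphis(x)}$. From $\log\phis(z)=\log z-\log\phi(z)$ one has
\[
(\log\phis)''(z)=-\frac{1}{z^{2}}-\frac{\phi''(z)}{\phi(z)}+\left(\frac{\phi'(z)}{\phi(z)}\right)^{2}.
\]
For $z=a+it$ with $|t|\leq g(x)=\so{a}$, the standard Bernstein derivative estimates (Proposition~\hyperref[it:limit]{A.1.4} and the companion bounds in the Appendix) give $|\phi(z)|\asymp\phi(a)$, $|\phi'(z)|\leq C\phi'(a)$, $|\phi''(z)|\leq C|\phi''(a)|$, and by $\Hyp$ (positive increase) we have $a\phi'(a)/\phi(a)\leq 1-\delta$ for large $a$, hence $(\phi'(a)/\phi(a))^{2}=\bo{1/a^{2}}$ and $|\phi''(a)|/\phi(a)=\bo{1/a^{2}}$ (from the classical inequality $|\phi''(a)|\leq C\phi'(a)/a$ via the integral representation of $\mu$). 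Combining these bounds gives $|(\log\phis)''(a+it)|=\bo{1/a^{2}}$ on the full segment.

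Putting everything together, the absolute value of the centred integral is $\bo{|b|^{3}/\vphis^{2}(x)}$, which tends to $0$ by the choice $|b|=\sobrackets{\vphis^{2/3}(x)}$ from \eqref{eq:choiceg}. Exponentiating proves that $x^{-ib}e^{i\Uphis(\vphis(x)+ib)}\to 1$ uniformly in the stated range. The main obstacle is the uniform complex-derivative bound on $(\log\phis)''$ along the segment; all ingredients for it are already available through the positive-increase hypothesis and the Bernstein function estimates collected in the Appendix.
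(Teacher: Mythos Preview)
Your argument is correct and yields the same bound $\bo{|b|^{3}/\vphis^{2}(x)}$ as the paper, but the packaging differs slightly. The paper's proof of this lemma splits $\phis(z)=z/\phi(z)$, sets $g_x(w)=(\phi(\vphis(x)+iw)-\phi(\vphis(x)))/\phi(\vphis(x))$, and bounds $\Re g_x$ and $\Im g_x$ separately (via Proposition~\hyperref[it:ineqIm]{A.1.8} and \hyperref[it:kappa'']{A.1.5}) before applying $\ln(1+u)\leq u$. You instead go straight to the second derivative $(\log\phis)''$ and bound it by $\bo{1/\vphis^{2}(x)}$ using Proposition~\hyperref[it:bivBounds_1]{A.1.6}; this is exactly the $V'''$ estimate the paper carries out \emph{later} for the $\Aphis$ expansion (see~\eqref{eq:V3}), so your route unifies the two computations and is arguably cleaner. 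Two small remarks: (i) the bounds \eqref{eq:kappa'_kappa_1}--\eqref{eq:kappa''_kappa_1} already give $|\phi'(z)/\phi(z)|\leq 2/\Re(z)$ and $|\phi''(z)/\phi(z)|\leq 4/\Re(z)^{2}$ for \emph{all} $z$ with $\Re(z)>0$, so no restriction to the segment $|t|\leq g(x)$ is needed, and you should cite Proposition~\hyperref[it:bivBounds_1]{A.1.6} rather than~\hyperref[it:limit]{A.1.4}; (ii) your appeal to the positive-increase hypothesis $\Hyp$ is superfluous here, since $a\phi'(a)/\phi(a)\leq 1$ holds for every Bernstein function by~\eqref{eq:ineq} and the required second-derivative bounds follow directly from Proposition~\hyperref[it:bivBounds_1]{A.1.6} without $\Hyp$.
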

\begin{proof}
	Note that from \eqref{eq:LphisRep} and the definition of $\phis$, \eqref{eq:varphis}, we deduce that
	\begin{equation}\label{eq:lemmaU}
	\begin{split}
	x^{-ib}e^{i\Uphis(\vphis(x)+ib)}
	&=\exp \lbrb{i\lbrb{\int_{0}^{b}\ln\abs{\phis\lbrb{\vphis(x)+iw}} \D w-b\ln\phis\lbrb{\vphis(x)}}}
	\\
	&=\exp \lbrb{i\lbrb{\int_{0}^{b}\ln\abs{\frac{\vphis(x)+iw}{\vphis(x)}} \D w-\int_{0}^{b}\ln\abs{\frac{\phi\lbrb{\vphis(x)+iw}}{\phi\lbrb{\vphis(x)}}} \D w}}  .		  
	\end{split}
	\end{equation}
	Let $g_x(w):=(\phi\lbrb{\vphis(x)+iw}-\phi\lbrb{\vphis(x)})/(\phi\lbrb{\vphis(x)})$. From Proposition
	\hyperref[it:ineqIm]{A.1.8} we
	conclude that
	\begin{equation*}
	\begin{split}
	&\abs{\Im\lbrb{g_x(w)}}\leq \frac{|w|}{\vphis(x)} \, 		    
	\end{split}
	\end{equation*}
	and from \eqref{eq:Bern} and Proposition \hyperref[it:kappa'']{A.1.5}
	\begin{equation*}
	\begin{split}
	\Re(g_x(w))&=\frac{\int_{0}^\infty \lbrb{1-\cos(wv)}e^{-\vphis(x)v}\mu(\D v)}{\phi\lbrb{\vphis(x)}}
	\leq
	w^2\frac{\int_{0}^\infty v^2e^{-\vphis(x)v}\mu(\D
	v)}{\phi\lbrb{\vphis(x)}}\\
	&=	 w^2\frac{\abs{\phi''(\vphis(x))}}{\phi\lbrb{\vphis(x)}}\leq 4\frac{w^2}{\vphis^2(x)}  .
	\end{split}
	\end{equation*}
Note from the first equality above that $\Re(g_x(w)) \geq 0$ as all
quantities therein are non-negative. Therefore,
since $\ln(1+x) \leq x$, for $x > -1$,
		\begin{equation*}
	\begin{split}
	\abs{\ln\lbrb{\abs{1+g_x(w)}}}
	&=\frac12
	\ln
	\lbrb{(1 +\Re(g_x(w)))^2 + \Im^2(g_x(w))}\\
	&\leq 2\Re(g_x(w)) +\Re^2(g_x(w))+ \Im^2(g_x(w)) 
	 \leq 9\frac{w^2}{\vphis(x)^2} + 16 \frac{w^4}{\vphis^4(x)},
	\end{split}
	\end{equation*}
	which leads to
	\begin{equation*}
	\begin{split}
	\abs{\int_{0}^{b}\ln\abs{\frac{\vphis(x)+iw}{\vphis(x)}}
	\D w -\int_{0}^{b}\ln\abs{1+g_x(w)} \D w}
	&\leq \int_{0}^{|b|}\ln\abs{1+i\frac{w}{\vphis(x)}}
+\abs{\ln\lbrb{\abs{1+g_x(w)}}}\D w\\
	&\leq \int_{0}^{|b|}10\frac{w^2}{\vphis^2(x)} +
	 16 \frac{w^4}{\vphis^4(x)
	 }
	 \D w \leq 4 \frac{|b|^3}{\vphis^2(x)} +
	 4 \frac{|b|^5}{\vphis^4(x)} .
	\end{split}
	\end{equation*}
	The last quantity tends to $0$ uniformly for
	$|b|=\mathrm{o}(\vphis^{2/3}(x))$, so substituting in \eqref{eq:lemmaU}
	concludes the proof.
\end{proof}
We remind that $n$ is fixed and compute that on
$|b|\leq g(x) = \so{\vphis(x)}$, because
$\Gamma(z+1) = z\Gamma(z)$ for every complex $z$,
as $x\to \infty$,
\begin{equation}\label{eq:gamma}
\begin{split}
\frac{\Gamma\lbrb{\vphis(x)+n+ib}}{\Gamma\lbrb{\vphis(x)+ib}} &= \prod_{k=0}^{n-1}
\lbrb{\vphis(x) + k + ib}
=\vphis^n(x)\prod_{k=0}^{n-1}
\lbrb{1 + \frac{k + ib}{\vphis(x)}}
= \vphis^n(x) \lbrb{1+\so{1}}  .  
\end{split}
\end{equation}
The last equality and Lemma \ref{lemma:U} imply
\begin{equation}\label{eq:Isplit}
\begin{split}
I_{n,1}(x)&=\vphis^n(x)\int_{-g(x)}^{g(x)}\frac{1+u_{n,1}(x,b)}{\phis^{1/2}\lbrb{\vphis(x)+1+ib}}e^{-\Aphis\lbrb{\vphis(x)+ib}} \D b
=:\vphis^n(x)I^*_{n,1}(x)  ,
\end{split}
\end{equation}
where $\sup_{|b|\leq g(x)}|u_{n,1}(x,b)|=\so{1}$ as $x \to \infty$.
We continue with the study of 
$\Aphis\lbrb{\vphis(x)+ib}$ in order to
understand $I^*_{n,1}$.
 Recall the respective definitions of $\phis$ and $\Aphis$,
\eqref{eq:varphis} and \eqref{eq:LphisRep}
\[
\phis(z) = \frac{z}{\phi(z)} \text{ and } \Aphis (a + ib) =\int_{0}^{b}\arg\phis(a+iw) \D w
 .
\]

Note that
from \cite[(3.6)]{ps16}, we have for each complex $z$
that $|arg(z)| \geq |arg(\phi(z)|$, so for $z = \vphis(x) + iw$, because $\Re z $ would be positive, 
we obtain 
$\arg \phis(z) = \arg z - \arg \phi(z)$ and
$\log_0(\phis(z)) = \log_0(z) - \log_0(\phi(z))$. Therefore,
\begin{equation*}
\begin{split}
\Aphis\lbrb{\vphis(x)+ib} &= \int_{0}^{b}\arg \lbrb{\vphis(x) + iw} -
\arg \lbrb{\phi\lbrb{\vphis(x)+iw}}\D w \\
&=
\Im\lbrb{\int_{0}^{b}\log_0\lbrb{\vphis(x)+iw} \D w
-\int_{0}^{b}\log_0\phi\lbrb{\vphis(x)+iw} \D w}
\\
&= \Im\lbrb{\int_{0}^{b}\log_0\lbrb{\phis(\vphis(x) + iw} \D w}=: \Im(V\lbrb{b})  .	    
\end{split}
\end{equation*}
We will use Taylor's expansion for $V$ and for this reason we calculate
\begin{equation*}
\begin{split}
V'(b) &= \log_0\phis\lbrb{\vphis(x)+ib} = 
\log_0\lbrb{\vphis(x)+ib} - \log_0\phi\lbrb{\vphis(x)+ib} \\
V''(b) &= i\frac{\phis'\lbrb{\vphis(x)+ib}}{\phis\lbrb{\vphis(x)+ib}}=i\lbrb{\frac{1}{\vphis(x)+ib}-\frac{\phi'(\vphis(x)+ib)}{\phi(\vphis(x)+ib)}} \\
V'''(b) &= \frac{1}{\lbrb{\vphis(x)+ib}^2}
+
\frac{\phi''\lbrb{\vphis(x)+ib}}{\phi\lbrb{\vphis(x)+ib}}- \frac{\lbrb{\phi'\lbrb{\vphis(x)+ib}}^2}{\phi^2\lbrb{\vphis(x)+ib}}  .		    
\end{split}
\end{equation*}
From 
Proposition \hyperref[it:bivBounds_1]{A.1.6} we get that
\begin{equation}\label{eq:V3}
\begin{split}
&\abs{V'''(b)}	\leq \frac{10}{\vphis^2(x)}  .  
\end{split}
\end{equation}
Therefore, from Taylor's expansion of $V$ at zero, because $V(0) = 0$ and
$\Im(V'(0)) = \Im(\ln x) = 0$, we get for some $\theta \in
\lbrb{0,1}$
that
\begin{equation}\label{eq:Aes}
\begin{split}
\Aphis\lbrb{\vphis(x)+ib} = \Im(V(b)) =
\Im\lbrb{	\frac{b^2}{2}V''(0)+\frac{b^3}{3!}V'''(\theta b)}
= \frac{b^2}{2x\vphis'(x)}+ \frac{b^3}{3!}h(x,b)  , 
\end{split}
\end{equation}
where, by \eqref{eq:V3},
$ \sup_{b \in \R}|h(x,b)|\leq 10/ \vphis^2(x) $ and we have used that 
\[\Im\lbrb{V''(0)}=\Im\lbrb{i\frac{\phis'\lbrb{\vphis(x)}}{\phis\lbrb{\vphis(x)}}}=\frac{1}{x\vphis'(x)} .\]
 Also, we rewrite
\begin{equation}\label{eq:vpsi}
\begin{split}
&	\abs{\frac{1}{\phis^{1/2}\lbrb{\vphis(x)+1+ib}}}^2 
= \abs{\frac{\phi\lbrb{\vphis(x)+1+ib}}{\vphis(x)+1+ib} }\\
&=
\frac{\phi(\vphis(x))}{\vphis(x)}
\frac{1}{\abs{1+\frac{1}{\vphis(x)}+i\frac{b}{\vphis(x)}}}
\frac{\phi(\vphis(x)+1)}{\phi(\vphis(x))}
\abs{\frac{\phi\lbrb{\vphis(x)+1+ib} - \phi\lbrb{\vphis(x)+1}}
{\phi(\vphis(x)+1)} + 1}  .
\end{split}
\end{equation}
The first factor in the last product equals $1/x$ by the definition of $\vphis$ and
the other three tend to $1$ uniformly for $|b| \leq g(x) = \so{\vphis^{2/3}(x)}$ for $x \to \infty$
because $\vphis(x) \to \infty$ and, respectively, the facts $g(x) = \so{\vphis(x)}$ and
Proposition \hyperref[it:limi]{A.1.7}.
Therefore, the latter together with \eqref{eq:Isplit} and \eqref{eq:Aes} show that for 
$x \to \infty$ we can write $I^\ast_{n,1}$ in the form
\begin{equation}\label{eq:Istar}
\begin{split}
&I^*_{n,1}(x)=\frac{1}{\sqrt{x}}\int_{-g(x)}^{g(x)}\lbrb{1+w_{n, 1}(x,b)}e^{-\frac{b^2}{2x\vphis'(x)}- \frac{b^3}{3!}h(x,b)} \D b,
\end{split}
\end{equation}
where $\sup_{|b|\leq g(x)}|w_{n,
1}(x,b)|=\so{1}$. 
Changing variables $b = v \sqrt{x\vphis'(x)}$ 
we get
 \begin{equation}\label{eq:Istar1}
 \begin{split}
 I^*_{n,1}(x) &=
 \sqrt{\vphis'(x)}\int_{-\frac{g(x)}{\sqrt{x\vphis'(x)}}}^{\frac{g(x)}{\sqrt{x\vphis'(x)}}}
 \lbrb{1+w_{n,1}\lbrb{x, v\sqrt{x\vphis'(x)}}}e^
 {-\frac{v^2}{2}- \frac{v^3}{3!}\lbrb{x\vphis'(x)}^{\frac{3}{2}}
 h\lbrb{x,v\sqrt{x\vphis'(x)}}} \D v  .\\
 \end{split}
 \end{equation}
Now, recall from \eqref{eq:liminfsupH} that 
\[0<\liminfi{x}\frac{\vphis(x)}{x \vphis'(x)}\leq \limsupi{x} \frac{\vphis(x)}{x \vphis'(x)} < \infty  .
\]
As $\sup_{b \in \R}|h(x,b)|\leq 10/\vphis^2(x)$, this entails that
\begin{equation*}
    \begin{split}
       &\lim_{x \to \infty} \sup_{|v|\leq  \frac{g(x)}{\sqrt{x\vphis'(x)}}}
v\lbrb{x\vphis'(x)}^{\frac{3}{2}}h\lbrb{x,v\sqrt{x\vphis'(x)}}\leq 10\lim_{x \to \infty}\frac{g(x)\lbrb{x\vphis'(x)}^{\frac{3}{2}}}{\sqrt{x\vphis'(x)}\vphis^2(x)}\\
&=10\lim_{x \to
\infty}\frac{\vphis^{7/12}(x)\lbrb{x\vphis'
(x)}}{\vphis^2(x)}=\lim_{x \to
\infty}
\frac{\vphis^{7/12}(x)H(x)}{\vphis(x)}=0  .
    \end{split}
\end{equation*}
This means that uniformly on $|v|\leq  g(x)/\sqrt{x\vphis'(x)}$
\[\frac{v^2}{2}+ \frac{v^3}{3!}\lbrb{x\vphis'(x)}^{\frac{3}{2}}h\lbrb{x,v\sqrt{x\vphis'(x)}}=\frac{v^2}{2}\lbrb{1+\so{1}} .\]
Because we have chosen $g$ so that the bounds in the integral
\eqref{eq:Istar1} go to infinity as $x \to \infty$, see \eqref{eq:choiceg}, we obtain
\begin{equation}\label{eq:In1asymp}
\begin{split}
&\limi{x}\frac{I^*_{n,1}(x)}{\sqrt{ 
\vphis'(x)}}=\int_{-\infty}^{\infty}e^{-\frac{v^2}{2}} \D v =\sqrt{2\pi}  .
\end{split}
\end{equation}
and therefore, recall \eqref{eq:Isplit}, for $x\to\infty$
\begin{equation}\label{eq:lim}
I_{n,1} (x) \sim  \sqrt{2\pi\vphis'(x)}\vphis^n(x)  .
\end{equation}

We continue with the integral $I_{n,2}(x)$
\[
	I_{n,2}(x) =	\int_{g(x) < |b| < H(x)
	\vphis(x)}\frac{x^{-ib}e^{i\Uphis(\vphis(x)+ib)}}{\phis^{1/2}\lbrb{\vphis(x)+1+ib}}
	\frac{\Gamma\lbrb{\vphis(x)+n+ib}}{\Gamma\lbrb{\vphis(x)+ib}}e^{-\Aphis\lbrb{\vphis(x)+ib}} \D b  .
\]
First, from the definition \eqref{eq:LphisRep}, $\Uphis$ is real valued. Further, from \eqref{eq:liminfsupH} and similar to
\eqref{eq:gamma}

\begin{equation}
\begin{split}
\abs{\frac{\Gamma\lbrb{\vphis(x)+n+ib}}{\Gamma\lbrb{\vphis(x)+ib}}}
&= \abs{\prod_{k=0}^{n-1}
\lbrb{\vphis(x) + k + ib}}
=\abs{\vphis^n(x)\prod_{k=0}^{n-1}
\lbrb{1 + \frac{k + ib}{\vphis(x)}}}
\leq \vphis^n(x) \abs{2^n + u_{n,2}(x,b)} .  
\end{split}
\end{equation}
where $\sup_{|b|\leq  H(x)\vphis(x)}|u_{n,2}(x, b)|=\so{1}$ as $x \to \infty$. Next, 
since $\overline{\phis(\ab)}=\phis(a-ib)$ which leads to $\Aphis(\ab)=\Aphis(a-ib)$ for $a>0$, we get
\begin{equation*}
\begin{split}
	\frac{\abs{I_{n,2}(x)}}{\vphis^{n}(x)}\leq 
	2^{n+1}\int_{g(x) }^{H(x)\vphis(x)}\frac{1 +
	w_{n, 2}(b,x)}{\abs{\phis^{1/2}\lbrb{\vphis(x)+1+ib}}}e^{-\Aphis\lbrb{\vphis(x)+ib}}
	\D b \, 
\end{split}
\end{equation*}
with $\sup_{|b|\leq  H(x)\phis(x)}|w_{n, 2}(x, b)|=\so{1}$ as $x \to \infty$.

Furthermore, since the Taylor expansion \eqref{eq:Aes} is
valid with $\sup_{|b|\geq 0}|h(x,b)|\leq 10/\vphis^2(x)$ and $H(x)
:= \vphis(x)/(10x\vphis'(x))$, with a change of variables $b =
v\sqrt{x\vphis'(x)}$ we arrive at 

\begin{equation}\label{eq:estimate}
\begin{split}
	\frac{\abs{I_{n,2}(x)}}{\sqrt{\vphis'(x)}\vphis^{n}(x)}
	&\leq 2^{n+1}\sqrt{x}
	\int_{\frac{g(x)}{\sqrt{x\vphis'(x)}} }
	^{\frac{H(x)\vphis(x)}{\sqrt{x\vphis'(x)}}}
	\frac{1 + w_{n, 2}(b,x) }{\abs{\phis^{1/2}\lbrb{\vphis(x)+1+ib}}}
	e^{-\frac{v^2}{2}\lbrb{1-\frac{10}{3}\frac{v}{\vphis^2(x)}\lbrb{x\vphis'(x)}^{\frac32}}}\D v
	\\
&\leq 2^{n+1}\sqrt{x}
\int_{\frac{g(x)}{\sqrt{x\vphis'(x)}} }
^{\frac{H(x)}{\sqrt{x\vphis'(x)}}}
\frac{1 + w_{n, 2}(b,x) }{\abs{\phis^{1/2}\lbrb{\vphis(x)+1+ib}}}e^{-\frac{v^2}{3}} \D v  	    .
\end{split}
\end{equation}

Last, with the same arguments from \eqref{eq:vpsi} and for $x$ large enough
\begin{equation}
\begin{split}
	\abs{\frac{1}{\phis^{1/2}\lbrb{\vphis(x)+1+ib}}}^2 
&= \abs{\frac{\phi\lbrb{\vphis(x)+1+ib}}{\vphis(x)+1+ib} }\\
&=
\frac{\phi(\vphis(x))}{\vphis(x)}
\frac{1}{\abs{1+\frac{1}{\vphis(x)}+i\frac{b}{\vphis(x)}}}
\frac{\phi(\vphis(x)+1)}{\phi(\vphis(x))}
\abs{\frac{\phi\lbrb{\vphis(x)+1+ib}}
{\phi(\vphis(x)+1)}} \leq \frac{C_1^2}{x} ,
\end{split}
\end{equation}
where $C_1>0$ is finite from Proposition Proposition \hyperref[it:limi]{A.1.7}
and \eqref{eq:liminfsupH}.
The last two bounds and $\limi{x}g(x)/\sqrt{x\vphis'(x)}=\infty$  yield 
\begin{equation*}
\begin{split}
&	\limi{x}\frac{\abs{I_{n,2}(x)}}{\sqrt{\vphis'(x)}\vphis^{n}(x)}\leq	C_1 2^{n+2}\limi{x}\int_{b=\frac{g(x)}{\sqrt{x\vphis'(x)}} }^{\infty}e^{-v^2/3}dv=0    
\end{split}
\end{equation*}
and from \eqref{eq:In1asymp}, we obtain that $I_{n,2}(x) = \so{I_{n,1}(x)}$.

For the last integral, $I_{n,3}(x)$, 
we start with the change of variables  $b = \vphis(x)H(x)v$ in its definition \eqref{eq:defIn} which, after using \eqref{eq:gamma}, results in
\begin{equation} \label{eq:In3}
\begin{split}
		\frac{\abs{I_{n,3}(x)}}
		{\sqrt{\vphis'(x)}\vphis^{n}(x)}&\leq   \frac{2H(x)\vphis(x)}{\sqrt{\vphis'(x)}}\int_{v\geq 1}
		\frac{\lbrb{1+\frac{n}{\vphis(x)}+H(x)v}^n
		}{\abs{\phis^{1/2}\lbrb{\vphis(x)+1+i\vphis(x)H(x)v}}}
		e^{-\Aphis\lbrb{\vphis(x)+i\vphis(x)H(x)v}} \D v\\
&\leq \frac{2C_2 H(x) \vphis(x)}{\sqrt{\vphis'(x)}}\int_{v\geq 1} \frac{v^n}{\abs{\phis^{1/2}\lbrb{\vphis(x)+1+i\vphis(x)H(x)v}}}e^{-\Aphis\lbrb{\vphis(x)+iv
\vphis(x)H(x)}} \D v ,
\end{split}
\end{equation}
where $C_2 >0$ is such that
\[\lbrb{1+\frac{n}{\vphis(x)}+
H(x)v}^n
\leq C_2v^n\]
on $v\geq 1$ and its existence is assured from \eqref{eq:liminfsupH}. 

Next, again by \eqref{eq:liminfsupH},
$\eta := \liminfi{x}H(x) > 0$,
there exists $x'_\eta$ such that for
all $x > x'_\eta$, $H(x) \geq \eta/2$. Therefore, Theorem \ref{thm:mainBern}, \eqref{eq:arg}
ensures the existence of $x_\eta, C_\eta \geq 0$ such that for all
$x\geq x_\eta$ and all $t \geq \eta/2$
\[
\arg \phis\lbrb{\vphis(x)(1+it)}\geq \frac{C_\eta}{t}  .
\]
Moreover, in the proof of Theorem \ref{thm:mainBern} it is obvious, see in particular \eqref{eq:estimatesArg}, that $\arg \phis\lbrb{\vphis(x)(1+it)}\geq 0$ for all $t>0$ and therefore, for $x \geq \max\{x_\eta, x_\eta'\}$ and with the change of variables $w =  \vphis(x)t$, we get
\begin{equation}
    \begin{split}
\label{eq:Aphi_upper}
\Aphis\lbrb{\vphis(x)+i 
vH(x)} 
= \int_0^{v\vphis(x)H(x)} \arg \phis(\vphis(x)+iw) \D w=\vphis(x) \int_0^{vH(x)} \arg \phis(\vphis(x)(1+it)) \D t \\
\geq
\vphis(x) \int_{H(x)}^{vH(x)} \arg \phis(\vphis(x)(1+it)) \D t \geq \vphis(x) \int_{H(x)}^{vH(x)} \frac{C_\eta}{t} \D t = C_\eta \vphis(x) \ln (v) 
 .
    \end{split}
\end{equation}
For the modulus in \eqref{eq:In3}, reasoning 
as in \eqref{eq:vpsi} and using again
\eqref{eq:liminfsupH}, imply that
\begin{equation}
\begin{split}
&	\abs{\frac{1}{\phis^{1/2}\lbrb{\vphis(x)+1+ivH(x)\vphis(x)}}}^2 
\leq \frac{vH(x)\vphis(x)}{x\vphis(x)} \sup_{x} \frac{\phi(\vphis(x)+1)}{\phi(\vphis(x))}
\leq C_1^2\frac{v}{x}  .
\end{split}
\end{equation}
Using the last three bounds in \eqref{eq:In3} gives
for $x > \max\{x_\eta, x_\eta',
\phis((n+2)/C_\eta)\}$
\begin{equation*}
    \begin{split}
        		\frac{\abs{I_{n,3}}}{\sqrt{\vphis'(x)}\vphis^{n}(x)}
		&\leq \frac{2C_1C_2 H(x) \vphis(x)}{\sqrt{x\vphis'(x)}}
		\int_{v\geq 1}v^{n + \frac12} e^{-C_\eta\vphis(x) \ln(v)} \D v
\\
&\leq \frac{C_3\vphis(x)}{\sqrt{x\vphis'(x)}}\int_{v\geq 1} v^{n+1 - C_\eta \vphis(x)}\D v=
\frac{C_3\vphis(x)}{\sqrt{x\vphis'(x)}}\frac{1}{C_\eta \vphis(x)-n - 2}
    \end{split}
\end{equation*}
with $C_3 = 2C_1C_2 \sup_x H(x)$. Using \eqref{eq:liminfsupH} for the last time and because 
$\vphis(x) \to \infty$, we can conclude that $I_{n,3}(x) = \so{I_{n,1}(x)}$, see \eqref{eq:lim}. Substituting in
\eqref{eq:MB1asymp}, we indeed obtain the desired
asymptotics \eqref{eq:mainAsymp}, that is, for $x\to\infty$, it holds true that
\begin{equation*}
\begin{split}
\fIp^{(n)}(x)&\sim
\frac{C}
{\sqrt{2\pi} x^n}e^{-\int_{\phis(1)}^{x}\frac{\vphis(y)}{y}\D y}\lbrb{I_{n,1}(x) + I_{n,2}(x)+ I_{n,3}(x)}\\
& \sim \frac{C}
{\sqrt{2\pi} x^n}e^{-\int_{\phis(1)}^{x}\frac{\vphis(y)}{y}\D y} \sqrt{2\pi\vphis'(x)}\vphis^n(x)  .
\end{split}
\end{equation*}

The ultimate monotonicity is clear from the
constant in \eqref{eq:mainAsymp}, whereas when
$\limsupo{x}\mubar{2x}/{\mubar{x}}$ $<1$, we know
from Corollary \ref{cor:mainBern} that $\phi\in \Bc_{exp}(\varepsilon)$ for some $\varepsilon>0$
which, precisely as in the proof with Mellin
inversion of Corollary \ref{cor:asympCPP} below, shows
that the density $\fIp$ is analytic in the cone of
the complex plane $\curly{z\in\Cb: \Re(z)>0\text{ and } \abs{\arg z}<\varepsilon}$.
\end{proof}
\begin{proof}[Proof of Corollary \ref{cor:asympCPP}]
If $\xi$ is a compound Poisson process then $0<B:=\phi(\infty)=q+\mubar{0}<\infty$ and 
\begin{equation*}
    \liminfo{x}\frac{\int_{0}^{2x}\mubar{y}\D y}{\int_{0}^{x}\mubar{y}\D y}=2>1,
\end{equation*}
so $\mu$ is of \textit{positive increase}, see \eqref{eq:posIncrease}. Then Theorem \ref{thm:main} is applicable, with \eqref{eq:mainAsymp} valid for every $n\geq 0$. However, in this case, as $x\to\infty$, $\phis(x)=x/\phi(x)\sim x/B$ and hence $\vphis(x)\sim Bx$. Now, we have
\begin{equation*}
    \begin{split}
        y=\phis\lbrb{\vphis(y)}\iff \frac{\vphis(y)}{y}=\phi\lbrb{\vphis(y)}
    \end{split}
\end{equation*}
and substituting for $\phi$, see \eqref{eq:Bern}, we arrive at
\begin{equation*}
    \begin{split}
        \frac{\vphis(y)}{y}&=\phi\lbrb{\vphis(y)}=q+\vphis(y)\IntOI e^{-\vphis(y)w}\mubar{w}\D w\\
        &=q+\mubar{0}-\vphis(y)\IntOI e^{-\vphis(y)w}\lbrb{\mubar{0}-\mubar{w}}\D w= B-\IntOI e^{-\vphis(y)v}\mu(\D v)  ,
    \end{split}
\end{equation*}
so
\begin{equation*}
    \begin{split}
        \int_{\phis(1)}^{x}\frac{\vphis(y)}{y}\D y &= B(x-\phis(1))-\IntOI\int_{\phis(1)}^{x}e^{-\vphis(y)v}\D y\mu(\D v)  .
    \end{split}
\end{equation*}
Note that by the assumption $\int_0^1\mu(\D \nu)/\nu < \infty$, so
\[
\IntOI \int_{\phis(1)}^{\infty}e^{-\vphis(y)v}\D y\mu(\D v) < \infty .
\]
Therefore, since $\phis(1)>0$ (see the discussion after \eqref{eq:phis}), and because $\vphis(x)\sim Bx$ for $x\to\infty$, we obtain
\[\IntOI\int_{\phis(1)}^{x}e^{-\vphis(y)v}\D y\mu(\D v)=\IntOI\int_{\phis(1)}^{\infty}e^{-\vphis(y)v}\D y\mu(\D v)+\so{1}.\]
It remains to show that $\vphis'(x)\sim B$, as $x\to\infty$. This can be verified from the chain of identities using \eqref{eq:phis}
\begin{equation*}
    \begin{split}
        \frac{1}{\vphis'(x)}=\phis'\lbrb{\vphis(x)}=\frac{1}{\phi\lbrb{\vphis(x)}}-\frac{\vphis(x)\phi'\lbrb{\vphis(x)}}{\phi^2\lbrb{\vphis(x)}}\sim \frac{1}{B}-\frac{\vphis(x)\phi'\lbrb{\vphis(x)}}{B^2}
    \end{split}
\end{equation*}
and the fact that from Proposition \hyperref[it:repKappa1]{A.1.1}
we have that
\begin{equation*}
    \begin{split}
    \limi{x}x\phi'(x)&=\limi{x}\lbrb{\phi(x)-q}-\limi{x} x^2\IntOI e^{-yx}y\mubar{y}\D y\\
    &=B-q -\limi{x}\IntOI e^{-y}y\mubar{\frac{y}{x}}\D y=B-q-\mubar{0}=0  .
\end{split}
\end{equation*}  
If $\int_0^1\mu(\D \nu)/\nu = \infty$ then \eqref{eq:mainAsymp2} is obtained as above by simply noting that in this case
\begin{equation*}
    \begin{split}
        \int_{\phis(1)}^{x}\frac{\vphis(y)}{y}\D y &= B(x-\phis(1))-\IntOI\int_{\phis(1)}^{x}e^{-\vphis(y)v}\D y\mu(\D v)= B(x-\phis(1))+\so{x} .
    \end{split}
\end{equation*}

In all cases the fact that $\fIp$ is analytic in the specified cone follows from the fact that $\phi\in \Bc_{exp}\lbrb{\pi/2}$, see Lemma \ref{lem:CPP}, and the formal Mellin inversion \eqref{eq:MBdef} which gives that for $\zeta\in\Cb:\arg\zeta\in\lbrb{-\pi/2,\pi/2}$
\begin{equation*}
\fIp(\zeta)=\frac{1}{2\pi i}\int_{\Re(z)=a}\zeta^{-z}\frac{\Gamma\lbrb{z}}{\Wp(z)} \D z .
\end{equation*}
However, since $\abs{\zeta^{-z}}=|\zeta|^{\ab}e^{b\arg\zeta-ai\arg\zeta}$ for $z=\ab$, we see that $e^{b\arg\zeta}\Gamma\lbrb{\ab}/\Wp(\ab)$ is absolutely integrable since $\phi\in \Bc_{exp}\lbrb{\pi/2}$. This proves that the right-hand side and therefore the left-hand extend to analytic functions in $\curly{z\in\Cb: \Re(z)>0\text{ and } \abs{\arg z}<\pi/2}$. This concludes the proof.
\end{proof}

We proceed now with the proofs for Section \ref{sec:results}.
\begin{proof}[Proof of Lemma \ref{lem:asympWk}]
Since both $\Wp$ and $\Gamma$ (with $\phi(z)=z$) have the same Stirling type
representation, see \eqref{eq:Stirling}, formally \eqref{eq:LphisRep1} follows
since $\Lph$ and $\Eph$ are based on logarithms and $\log_0(\phis(z))=\log_0(z/\phi(z))=\log_0(z)-\log_0\phi(z)$. However, this
representation to be rigorous we need to check that the very last equality holds true. This is indeed the fact since $\Re(z)>0$ implies that $\abs{\arg z}<\pi/2$ and \cite[Proposition 3.10(10)]{ps16} gives $\abs{\arg\phi(z)}\leq \abs{\arg(z)}$ and thus $\arg\phis(z)=\arg(z)-\arg\phi(z)$. Relation
\eqref{eq:LphisRep1} is then
simply computing \eqref{eq:Stirling} with $\phis$ instead of $\phi$ along the specific contour
$1\rightarrow \Re(z) \rightarrow \Re(z)+i\Im(z)$. Next, since $\Ephs(z)=E_{\phi_0}(z)-\Eph(z)$, where for clarity $\phi_0(z)=z$, we have that $\Tphis=T_{\phi_0}-\Tph$ provided the individual limits for $\Eph,E_{\phi_0}$ do exist uniformly in $\Im(z)$.
For this we use \cite[Theorem 2.9]{Barker-Savov} where this limit was proved even for bivariate Bernstein functions but uniformity in  $\Im(z)$ is not explicitly mentioned. However, the estimate \cite[(3.29)]{Barker-Savov} in the proof of \cite[Theorem 2.9]{Barker-Savov} gives that for every Bernstein function it holds true that
\begin{equation*}
    \abs{\lbrb{\log_0\lbrb{\frac{\phi(u+z))}{\phi(u)}}}''}\leq \frac{8}{(u+\Re(z))^2}+\frac{8}{u^2} 
\end{equation*}
and it is immediate that the limit 
\begin{equation*}
    \limi{\Re(z)}\Ephs(z)=\limi{\Re(z)}\lbrb{E_{\phi_0}(z)-\Eph(z)}=\Tphis
\end{equation*}
is uniform in $\Im(z)$.
From \eqref{eq:Stirling1} and \eqref{eq:defT} to prove \eqref{eq:largeStir}  it remains to consider 
\begin{equation*}
    \begin{split}
      &\limi{\Re(z)}\sup_{\Im(z)\in\Rb}\abs{\frac{e^{L_{\phis}(z)-L_{\phis}(z-1)}}{\phis(z)}}=\limi{\Re(z)}\sup_{\Im(z)\in\Rb}\abs{e^{\int_{z\to z+1}\log_0\phis(\chi)-\log_0\phis(z)\D\chi}}\\
      &=\limi{\Re(z)}\sup_{\Im(z)\in\Rb}\abs{e^{\int_{0}^1\log_0\phi(z+v)-\log_0\phi(z)\D v}}\abs{e^{\int_{0}^1\log_0(z+v)-\log_0 z \D v}}\\
      &=\limi{\Re(z)}\sup_{\Im(z)\in\Rb}\abs{e^{\int_{0}^1\log_0\frac{\phi(z+v)}{\phi(z)}\D v}}\abs{e^{\int_{0}^1\log_0\lbrb{1+\frac{v}{z}}\D v}}  .
    \end{split}
\end{equation*}
Now, an application of Proposition \hyperref[it:limi]{A.1.7}
with $A=1$ yields that
\begin{equation*}
    \begin{split}
      &\limi{\Re(z)}\sup_{\Im(z)\in\Rb}\abs{\frac{e^{L_{\phis}(z)-L_{\phis}(z-1)}}{\phis(z)}}=1
    \end{split}
\end{equation*}
and  \eqref{eq:largeStir} is thus established.
\end{proof}
\begin{proof}[Proof of Theorem \ref{thm:mainBern}]
Let $z = a(1+it)$ with $t>\eta>0$ for some fixed $\eta$. Utilizing the second expression in \eqref{eq:Bern} we arrive at
\[
\phis(z) = \frac{z}{\phi(z)} = \frac{|z|^2}{q\overline{z} + |z|^2\int_{0}^\infty e^{-zy}
\mubar{y} \D y}
\]
and thus we can see that $ \arg \phis(z)$ is equal to
\begin{equation}\label{eq:defphisbar}
    \begin{split}
          &-\arg\left( q\overline{z} + |z|^2 \int_0^\infty e^{-iaty} e^{-ay
}\mubar{y} \D y\right)= \arg\left( qz + |z|^2 \int_0^\infty e^{iaty } e^{-ay
}\mubar{y} \D y\right)=:\arg\overline{\phis}(z)
  .  
    \end{split}
\end{equation}
As $a>0$ and $y \mapsto e^{-ay
}\mubar{y}$ is monotonically decreasing, we have that
\begin{equation*}
    \begin{split}
        \Im\lbrb{\overline{\phis}(z)}=&\Im\lbrb{ qz + |z|^2 \int_0^\infty e^{iaty } e^{-ay
}\mubar{y} \D y}=qat +|z|^2 \int_0^\infty \sin(tay) e^{-ay
}\mubar{y} \D y\geq 0  
    \end{split}
\end{equation*}
since both summands are non-negative. This implies
that either
\begin{equation}\label{eq:estimatesArg}
    \arg \phis(z)=\arg\overline{\phis}(z) \geq \frac{\pi}{2} \geq \frac{\eta\pi}{4t}\text{ or } \arg \phis(z)=\arg\overline{\phis}(z) = \arctan \lbrb{\frac{\Im(\overline{\phis}(z))}{\Re(\overline{\phis}(z))}}  
\end{equation}
depending on whether
\[\Re\lbrb{\overline{\phis}(z)}=\Re\lbrb{ qz + |z|^2 \int_0^\infty e^{iaty } e^{-ay
}\mubar{y}\D y}\] is
respectively 
negative or not.

We 
will prove that for all $\Re(z)=a$ large enough and for all $t>\eta$ with some $C_\eta'>0$
\begin{equation}\label{eq:toProve}
    \Im\lbrb{\overline{\phis}(z)} \geq C_\eta' \frac{\Re\lbrb{\overline{\phis}(z)}}t,
\end{equation}   which from \eqref{eq:estimatesArg}
would imply, using a standard estimate for $\arctan$, that for those values of $a,t$ with some $C_\eta>0$
\begin{equation}\label{eq:estimatesArg1}
    \arg \phis(z) \geq \frac{C_\eta}t,
\end{equation}
establishing \eqref{eq:arg}.

First, note that since $t>\eta$, it holds that 
\begin{equation}\label{eq:first}
    \Im(qz) = qat \geq \eta^2 q \frac{a}t = \eta^2\frac{\Re(qz)}t  .
\end{equation}
 Therefore, we are left to deal with the real and imaginary part
of the integral in \eqref{eq:defphisbar}.

Similarly to a previous argument, because $y \mapsto e^{-ay
}\mubar{y+\pi/(2at)}$ is monotonically decreasing,
\begin{equation*}
\begin{split}
 |z|^{-2}\Re&\lbrb{\overline{\phis}(a(1+it))-qa(1+it)}= \int_0^\infty \cos(aty)e^{-ay}\mubar{y} \D y \\
 &=\int_{0}^{\frac{\pi}{2at}}\cos(aty)e^{-ay}\mubar{y} \D y - \IntOI  \sin(aty)e^{-ay}e^{-\frac{\pi}{2t}}\mubar{y+\frac{\pi}{2at}}\D y\\
& \leq 
\int_0^{\frac{\pi}{2at}} \cos(aty)
 e^{-ay
}\mubar{y} \D y \leq 
\int_0^{\frac{\pi}{2at}} \mubar{y} \D y  .
\end{split}
\end{equation*}
Analogously, since $y \mapsto e^{-ay
}\mubar{y}$ is monotonically decreasing, we get the  chain of inequalities
\begin{equation}\label{eq:bound_Im}
\begin{split}
 |z|^{-2}\Im\lbrb{\overline{\phis}(a(1+it))-qa(1+it)} &= \int_0^\infty \sin(aty)
 e^{-ay
}\mubar{y} \D y   \geq 
\int_0^{\frac{2\pi}{at}} \sin(aty)
 e^{-ya
}\mubar{y} \D y 
\\
& = \int_0^{\frac{\pi}{at}} \sin(aty)
 e^{-ay}\mubar{y}
 \left( 1 - e^{-\frac{\pi}{t}}\frac{\mubar{y + \frac{\pi}{at}}}{\mubar{y}}
 \right) \D y 
 \\
 &\geq
\int_{\frac{\pi}{4at}}^{\frac{\pi}{2at}} \sin(aty)
 e^{-ay}\mubar{y}
 \left( 1 - e^{-\frac{\pi}{t}}
 \right) \D y 
\geq
 \frac{C''_\eta}{t} \int_{\frac{\pi}{4at}}^{\frac{\pi}{2at}} \mubar{y} \D y , 
\end{split}
\end{equation}
where for the last inequality we have used that on the range of integration, $\sin(aty)$ is larger than some constant and for $t>\eta$, $t( 1 - e^{-\pi/t}) $
is again greater than some positive constant. Set $I(x)=\int_{0}^x\mubar{y}\D y$. Then from the inequalities above, we get that
 \begin{equation*}
     \begin{split}
          &|z|^{-2}\Re\lbrb{\overline{\phis}(a(1+it))-qa(1+it)}\leq I\lbrb{\frac{\pi}{2at}},\\
          &|z|^{-2}\Im\lbrb{\overline{\phis}(a(1+it))-qa(1+it)}\geq \frac{C''_\eta}{t} \lbrb{I\lbrb{\frac{\pi}{2at}} - I\lbrb{\frac{\pi}{4at}}}  .
     \end{split}
 \end{equation*}
By the positive increase assumption, there
exists $x'$ such that for all $x \in [0,x']$, we have that 
\[\frac{I(x)}{I(2x)} < \frac12\lbrb{1 + \limsupo{x}\frac{I(x)}{I(2x)}}< 1  ,\]
so for 
$a \geq \pi/(4\eta x')=:a_\eta$, since $t>\eta$,
\begin{equation}\label{eq:second}
\begin{split}
     |z|^{-2}\Im\lbrb{\overline{\phis}(a(1+it))-qa(1+it)}&\geq
 \frac{C''_\eta}{t}
\lbrb{ I\lbrb{\frac{\pi}{2at}} - I\lbrb{\frac{\pi}{4at}}}\\
&\geq \frac{C'''_\eta}{t}
I\lbrb{\frac{\pi}{2at}} \geq  \frac{C'''_\eta}{t} |z|^{-2}\Re\lbrb{\overline{\phis}(a(1+it))-qa(1+it)} 
\end{split}
\end{equation}
with some positive constant $C'''_\eta$, which together with \eqref{eq:first} validates \eqref{eq:toProve}, which in turn through \eqref{eq:estimatesArg1} leads to \eqref{eq:estimatesArg} and finally to the relation \eqref{eq:arg}.

Let us now assume in addition that
$\limsupo{x}\mubar{2x}/\mubar{x}<1$, so there
exists $x'' > 0$ such that for all $x \in [0, x'']$ we would have that 
\[
\frac{\mubar{2x}}{\mubar{x}} < \frac12\lbrb{{1 +
\limsupo{x}\frac{\mubar{2x}}{\mubar{x}}}} < 1  .
\]
Therefore, as in (\ref{eq:bound_Im}),
\begin{equation*}
\begin{split}
|z|^{-2} \Im\lbrb{\overline{\phis}(a(1+it))-qa(1+it)}
 &\geq
\int_{\frac{\pi}{4at}}^{\frac{\pi}{2at}} \sin(aty)
 e^{-ay}\mubar{y}
  \left( 1 - \frac{\mubar{y + \frac{\pi}{at}}}{\mubar{y}}
 \right) \D y
 \\
\geq
\sin\lbrb{\frac{\pi}{4}}
  \left( 1 - \frac{\mubar{\frac{\pi}{at}}}{
  {\mubar{\frac{\pi}{2at}}}}
 \right)
 \int_{\frac{\pi}{4at}}^{\frac{\pi}{2at}} \mubar{y} 
 \D y 
 &= \sin\lbrb{\frac{\pi}{4}}
  \left( 1 - \frac{\mubar{\frac{\pi}{at}}}{
  {\mubar{\frac{\pi}{2at}}}}
 \right)\lbrb{I\lbrb{\frac{\pi}{2at}}-I\lbrb{\frac{\pi}{4at}}} ,
\end{split}
\end{equation*}
so as before, see \eqref{eq:second}, we can get that if 
$a \geq \max\{\pi/(4\eta x'), \pi/(2\eta x'')\}=:a'_\eta$,
 then  for some $ \epsilon'_\eta,\epsilon''_\eta>0$
\begin{equation}\label{eq:third}
\begin{split}
    |z|^{-2} \Im\lbrb{\overline{\phis}(a(1+it))-qa(1+it)}&\geq
  \epsilon'_\eta
\lbrb{ I\lbrb{\frac{\pi}{2at}} - I\lbrb{\frac{\pi}{4at}}}\\
&\geq \epsilon''_\eta
I\lbrb{\frac{\pi}{2at}} \geq  \epsilon''_\eta \Re\lbrb{\overline{\phis}(a(1+it))-qa(1+it)} . 
\end{split}
\end{equation}
Further, in this case $\Im(qz)=qat>\eta aq=\eta \Re(qz)$ and with \eqref{eq:third} we get that
\begin{equation}\label{eq:toProve1}
    \Im\lbrb{\overline{\phis}(z)} \geq \min\{\eta,\epsilon''_\eta\} \Re\lbrb{\overline{\phis}(z)},
\end{equation}   which, from \eqref{eq:estimatesArg}, yields
$(\ref{eq:arg1})$ holds with some $\epsilon_\eta> 0$.
\end{proof}
\begin{proof}[Proof of Corollary \ref{cor:mainBern}]
Under the conditions of this corollary, we have from Theorem \ref{thm:mainBern} that $\arg\phis\lbrb{a\lbrb{1+it}}\geq \varepsilon$ for $a>a_0>0,t>t_0>0$ and some $\varepsilon>0$, see \eqref{eq:arg1}. From \eqref{eq:LphisRep1} we have that 
\begin{equation*}
    \Aphis(z)=\int_{0}^{\Im(z)}\arg\phis(\Re(z)+iw)\D w  .
\end{equation*}
Note that $ \Aphis(\bar{z})= \Aphis(z)$, which is immediate since $\arg\phis(\bar{z})=-\arg\phis(z)$ and therefore
\begin{equation*}
\begin{split}
     \Aphis(\bar{z})&=\int_{0}^{-\Im(z)}\arg\phis(\Re(z)+iw)\D w=-\int_{0}^{\Im(z)}\arg\phis(\Re(z)-iw)\D w\\
     &=\int_{0}^{\Im(z)}\arg\phis(\Re(z)+iw)\D w=\Aphis(z)  .
\end{split}
\end{equation*}
Therefore, we consider $z=\ab,b>0$. We get the following
\begin{equation*}
    \begin{split}
        &\Aphis(\ab)=\int_{0}^{b}\arg\phis(a+iw)\D w=a\int_{0}^{\frac{b}{a}}\arg\phis(a(1+it))\D  t  .
    \end{split}
\end{equation*}
From the proof of Theorem \ref{thm:mainBern}, see \eqref{eq:estimatesArg}, we know that $\arg\phis(a(1+it))\geq 0$ and henceforth taking $a>a_0,b>t_0a$ we arrive at 
\begin{equation}\label{eq:Aest}
    \begin{split}
        &\Aphis(\ab)\geq a\int_{t_0}^{\frac{b}{a}}\arg\phis(a(1+it))\D t\geq a\varepsilon\lbrb{\frac{b}{a}-t_0}=\varepsilon b-\varepsilon at_0
    \end{split}
\end{equation}
and we conclude that the claim \eqref{eq:estAph} is true for $a_0$ and $b_0=t_0$. Fix $a>a_0$ and from \eqref{eq:largeStir} note that
\begin{equation*}
    \begin{split}
        & \abs{\frac{\Gamma\lbrb{z}}{\Wp(z)}}
\leq  C e^{\Gphis\lbrb{a}}
\frac{1}{\abs{\phis^{1/2}\lbrb{a+1+ib}}}e^{-\Aphis(\ab)}  .
    \end{split}
\end{equation*}
Now, clearly from  \eqref{eq:phis} and from the second expression in \eqref{eq:Bern} we have that
\begin{equation*}
 \sup_{v\in\Rb}\frac{1}{\abs{\phis^{1/2}\lbrb{a+1+iv}}}=\sup_{v\in\Rb}\sqrt{\abs{\frac{q}{a+1+iv}}+\IntOI e^{-(a+1)y}\mubar{y}\D y}  <\infty
\end{equation*}
and thus with some other constant $C>0$
\begin{equation}\label{eq:bound}
    \begin{split}
        & \abs{\frac{\Gamma\lbrb{z}}{\Wp(z)}}
\leq  C e^{-\Aphis(\ab)}  .
    \end{split}
\end{equation}
Finally, from \eqref{eq:Aest} we have that for any $\varepsilon'<\varepsilon$ and $a>a_0$ fixed
\begin{equation*}
    \limi{b}e^{\varepsilon' b}\abs{\frac{\Gamma\lbrb{z}}{\Wp(z)}}\leq \limi{b}e^{\varepsilon' b}e^{-\varepsilon b+\varepsilon at_0}=0  .
\end{equation*}
To conclude the proof, we will show that if the last limit holds for some $\Re(z)=a$ it holds along the line $a-1+i\Rb$. To do so, we recall the recurrence relation \eqref{eq:Wp} from which and \eqref{eq:Bern} we get for any $\varepsilon'<\varepsilon$ 
\begin{equation*}
    \begin{split}
        \limi{b}e^{\varepsilon' b}\abs{\frac{\Gamma\lbrb{a-1+ib}}{\Wp(a-1+ib)}}&=\limi{b}e^{\varepsilon' b}\abs{\frac{\phi(a-1+ib)}{a-1+ib}\frac{\Gamma\lbrb{\ab}}{\Wp(\ab)}}\\
        &=\limi{b}\abs{\lbrb{\frac{q}{a-1+ib}+\IntOI e^{ (a-1+ib)y}\mubar{y}\D y}}\abs{e^{\varepsilon' b}\frac{\Gamma\lbrb{\ab}}{\Wp(\ab)}}=0  .
    \end{split}
\end{equation*}
Thus, the final general claim is established and $\phi\in \Bc_{exp}(\varepsilon)$. It remains to consider the case $\mubar{y}\simo y^{-\alpha}l(y)$ where $\alpha\in\lbbrb{0,1}$ and $l$ is a slowly varying function at zero. Clearly $\limsupo{x}\mubar{2x}/\mubar{x}<1$. Also in this case $\phi(x)\simi cx^{\alpha}l(1/x)$, see \cite[Section 1 of Chapter III]{Bertoin-96} and from the equivalent expression of \textit{positive increase}, see \eqref{eq:posIncrease}, we get that $\mu$ is of \textit{positive increase}. This settles the claim.
\end{proof}
\begin{proof}[Proof of Lemma \ref{lem:CPP}]
Since $\phi(\infty)<\infty$ we have 
\begin{equation*}
    \phi(z)=q+\mubar{0}-\IntOI e^{-zy}\mu(\D y).
\end{equation*}
Therefore, for any $\epsilon>0$ there is $a_\epsilon$ such that for $\Re(z)=a>a_\epsilon$ we have that along $a+i\Rb$ 
\[\Re\phi(z)\geq \frac{q+\mubar{0}}2\text{ and } \abs{\Im\phi(z)}<\frac{\epsilon}2\lbrb{q+\mubar{0}}  .\]
Henceforth
\begin{equation*}
    \abs{\arg\phi(z)}=\abs{\arctan\lbrb{\frac{\Im\phi(z)}{\Re\phi(z)}}}\leq \epsilon  .
\end{equation*}
As a result $\abs{\arg\phis(z)-\arg z}\leq
\epsilon$. This means that for $z=\ab,b>0$ 
\begin{equation}
    \begin{split}
        &\Aphis(\ab)= \int_{0}^{b}\arg\phis(a+it)\D t\geq \int_{0}^{b}\arg(a+it)\D t -\epsilon b\simi \lbrb{\frac{\pi}{2}-\epsilon}b  .
    \end{split}
\end{equation}
Using \eqref{eq:bound} we arrive at 
$
    \limi{b}e^{\varepsilon' b}\abs{\Gamma\lbrb{z}/\Wp(z)}=0
$
for any $\epsilon'<\pi/2-\epsilon$. We can recursively push back the real part $a\to a-1$ and as in the proof of Corollary \ref{cor:mainBern} show that $\phi\in \Bc_{\exp}\lbrb{\pi/2-\epsilon}$. Since $\epsilon>0$ is arbitrarily small, we get that $\phi\in \Bc_{\exp}\lbrb{\pi/2}$.  
\end{proof}

	\appendix
\section{Auxiliary results on Bernstein functions}\label{sec:append}
Recall that $\phis(z):=z/\phi(z)$.
\begin{proposition}\label{prop:kappa}
	Let $\phi$ be a Bernstein function and $z \in \C$ with  $\Re(z) > 0$.
	\begin{enumerate}
		\item\label{it:repKappa1}  For all z 
		\begin{equation}\label{eq:kappa'}
		\begin{split}
		\phi'(z)&= \dr +\IntOI ye^{-zy}\mu(\D y)\\
		&=\dr+\IntOI e^{-zy}\mubar{y}\D y-z\IntOI e^{-zy}y\mubar{y}\\
		&=\frac{\phi\lbrb{z}-q}{z}-z\IntOI e^{-zy}y\mubar{y}\D y  .
		\end{split}
		\end{equation}
		Also, for $x>0$
		\begin{equation}\label{eq:ineq}
		    \frac{x\phi'(x)}{\phi(x)}\leq 1.
		\end{equation}
		\item \label{it:elementaryBiv} It holds true that  $\phi$ is  non-decreasing on $[0,\infty)$, and
		$\phi'$ is completely monotone, positive and non-increasing on $[0,\infty)$.  In particular, $\phi$ is strictly log-concave on $[0,\infty)$. $\phis$ is increasing on $\lbbrb{0,\infty}$ with $\limi{x}\phis(x)=\infty$
		provided $d=0$.
		
		\item \label{it:ratioBounds} For all 
		$z$ and $q > 0$
		we have that 
		\begin{equation}\label{eq:ratioBounds}
		\abs{\frac{\phi(\Re(z))}{\phi\lbrb{z}}}\leq 1.
		\end{equation}
		Moreover, the inequality \eqref{eq:ratioBounds} is valid for $q = z = 0$
		when $\phi(0) > 0$.
		\item\label{it:limit} For all $z$ we have that   
		\begin{equation}\label{eq:limSup}
		\Re\lbrb{\phi\lbrb{z}}>0  .
		\end{equation}
		
		\item\label{it:kappa''} For all 
		$z$
		and $n \geq 1$
		\begin{equation}\label{eq:kappa''}
\abs{\phi^{(n)}\lbrb{z}}\leq \abs{\phi^{(n)}(\Rez)}  . 		\end{equation}
		
		\item\label{it:bivBounds_1} For all $z$ we have that
		\begin{equation}\label{eq:kappa'_kappa_1}
		\begin{split}
		\abs{\frac{\phi'\lbrb{z}}{\phi\lbrb{z}}}\leq \frac{2}{\Rez}
		\end{split}
		\end{equation}
		and
		\begin{equation}\label{eq:kappa''_kappa_1}
		\frac{|\phi''(z)|}{\abs{\phi(z)}}=\frac{\abs{\phi''(z)}}{\phi(z)}\leq \frac{4}{\Re^2(z)}  .
		\end{equation}
		
		\item\label{it:limi} For any $A>0$ we have that
		\begin{equation}\label{eq:limi}
		\limi{x}\sup_{0\leq v\leq A;y\in\Rb}\abs{\frac{\phi( x+v+iy)}{\phi(x+iy)}}=1  .
		\end{equation}
		
		\item\label{it:ineqIm}
		For any $b\in \Rb$ and x > 0
		we have that
		\begin{equation}\label{eq:ineqIm}
		\begin{split}
		&\abs{\frac{\phi(x+ib)-\phi\lbrb{x}}{\phi\lbrb{x}}}\leq\frac{|b|}{x}  .
		\end{split}
		\end{equation}
	\end{enumerate}
\end{proposition}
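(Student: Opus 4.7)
The plan is to establish the eight assertions in the order stated, using the Bernstein--L\'evy representation $\phi(z)=q+dz+\IntOI(1-e^{-zy})\mu(\D y)=q+dz+z\IntOI e^{-zy}\mubar{y}\D y$ as the master identity and the pointwise bound $|e^{-zy}|=e^{-y\Re(z)}$ as the core analytic tool. Differentiation under the integral sign is the workhorse throughout, and all its instances are justified by locally uniform $\mu$- or $\mubar{\cdot}$-integrability on $\Re(z)>0$.

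For part (i), differentiation under the integral in the first form of $\phi$ yields the first expression for $\phi'$; integration by parts using $\mu(\D y)=-\D\mubar{y}$ converts it to the second; and substituting $\IntOI e^{-zy}\mubar{y}\D y=(\phi(z)-q-dz)/z$ into the second produces the third. Reading the third at $z=x>0$ shows that $\phi(x)-q$ equals $x\phi'(x)$ plus a manifestly non-negative quantity, which is \eqref{eq:ineq}. For (ii), non-negativity of $\phi'$ is immediate from the first form and complete monotonicity of $\phi'$ follows by iterated differentiation under the integral. In particular $\phi''\leq 0$ on $(0,\infty)$, and a positive concave function has $(\log\phi)''=\phi''/\phi-(\phi'/\phi)^{2}\leq 0$, giving log-concavity (strict as soon as the subordinator is non-trivial). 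For $\phis(x)=x/\phi(x)$ the derivative $\phis'(x)=(\phi(x)-x\phi'(x))/\phi^{2}(x)$ is non-negative by \eqref{eq:ineq}, and $\phis(x)\to\infty$ follows from $\phi(x)/x\to d=0$, itself a consequence of dominated convergence applied to $\IntOI e^{-xy}\mubar{y}\D y$.

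Assertions (iii), (iv) and (v) all flow from the single pointwise inequality $\Re(1-e^{-zy})=1-e^{-y\Re(z)}\cos(y\Im(z))\geq 1-e^{-y\Re(z)}\geq 0$. Integrating yields $\Re\phi(z)\geq \phi(\Re(z))\geq 0$, and hence $|\phi(z)|\geq \Re\phi(z)\geq \phi(\Re(z))$, which is (iii); the stated boundary case is handled by continuity wherever the denominator is positive. Strict positivity $\Re\phi(z)>0$ in (iv) is immediate once one observes that the same integrand is strictly positive on a set of positive $\mu$-measure unless the underlying subordinator is trivial, in which case $q>0$ or $d>0$ furnishes strict positivity directly. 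Part (v) uses the termwise formula $\phi^{(n)}(z)=(-1)^{n-1}\IntOI y^{n}e^{-zy}\mu(\D y)$ for $n\geq 1$ (with a $d$-contribution when $n=1$) together with $|e^{-zy}|=e^{-y\Re(z)}$ to bound $|\phi^{(n)}(z)|$ termwise by $|\phi^{(n)}(\Re(z))|$.

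For (vi) I would combine (iii) and (v) into the envelope $|\phi^{(n)}(z)|/|\phi(z)|\leq |\phi^{(n)}(\Re(z))|/\phi(\Re(z))$, reducing both bounds to the real axis. The first-derivative estimate is then $\phi'(x)/\phi(x)\leq 1/x$, exactly \eqref{eq:ineq}. For the second derivative, the substitution $u=xy$ reduces the claim to the elementary inequality $u^{2}e^{-u}\leq 1-e^{-u}$ on $u\geq 0$ (trivial near $0$ by Taylor expansion, and the ratio $u^{2}/(e^{u}-1)$ has a single interior maximum whose value is numerically less than $1$), so that $x^{2}\IntOI y^{2}e^{-xy}\mu(\D y)\leq \IntOI(1-e^{-xy})\mu(\D y)\leq \phi(x)$; the loose constants $2$ and $4$ absorb the rest. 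For (viii) I would parametrise the segment $[x,x+ib]$ to write $\phi(x+ib)-\phi(x)=ib\int_{0}^{1}\phi'(x+ibs)\D s$, bound $|\phi'(x+ibs)|\leq \phi'(x)$ via (v), and invoke \eqref{eq:ineq}. Finally, for (vii) I would use the identity $\phi(x+v+iy)-\phi(x+iy)=dv+\IntOI e^{-(x+iy)w}(1-e^{-vw})\mu(\D w)$, whose modulus is dominated uniformly in $y$ and $v\in[0,A]$ by $dA+\IntOI e^{-xw}(1-e^{-Aw})\mu(\D w)$, and then divide by $|\phi(x+iy)|\geq \phi(x)$. The main obstacle here --- and in fact the only mildly non-routine point in the whole proposition --- is that the limit $x\to\infty$ must be split: when $\phi(\infty)=\infty$ the numerator is bounded (since $1-e^{-Aw}\leq \min(1,Aw)$ is $\mu$-integrable) while the denominator diverges; when $\phi(\infty)<\infty$ the subordinator is compound Poisson with killing, so $d=0$ and $\mu$ is finite, the numerator then tends to zero by dominated convergence, and the denominator $\phi(x)$ stays bounded away from zero, tending to $\phi(\infty)>0$.
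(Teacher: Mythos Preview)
Your proposal is correct throughout. The paper itself only supplies arguments for items (vii) and (viii), referring to the literature (in particular the appendix of Barker--Savov) for the remaining standard facts; your treatment of (i)--(vi) fills those in cleanly, and in (vi) you in fact obtain the sharper constants $1$ in place of $2$ and $4$.

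For item (viii) your argument coincides with the paper's: parametrise the segment, bound $|\phi'(x+ibs)|\leq\phi'(x)$ via (v), then invoke \eqref{eq:ineq}. The one genuine methodological difference is in item (vii). You control the increment $\phi(x+v+iy)-\phi(x+iy)$ directly from the L\'evy--Khintchine integral, which forces the case split on $\phi(\infty)$ finite versus infinite. The paper instead writes the increment as $\int_{x}^{x+v}\phi'(w+iy)\,\D w$, bounds $|\phi'(w+iy)|\leq\phi'(w)\leq\phi'(x)$ via (v) and monotonicity of $\phi'$, and divides by $|\phi(x+iy)|\geq\phi(x)$ via (iii); the whole ratio is then $\leq A\,\phi'(x)/\phi(x)$, which tends to zero by \eqref{eq:kappa'_kappa_1}. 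This avoids your dichotomy entirely, since the limit $\phi'(x)/\phi(x)\to 0$ holds uniformly in both regimes. Your route is perfectly valid but slightly longer; the paper's buys a one-line conclusion by reusing the logarithmic-derivative bound already in hand.
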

\begin{proof}
	We prove solely the items whose proof cannot be found in the literature and especially in the appendix of \cite{Barker-Savov}. For item \ref{it:limi} we use the chain of identities
	\begin{equation*}
	\begin{split}
	&\limi{x}\sup_{0\leq v\leq A;y\in\Rb}\abs{\frac{\phi(x+v+iy)}{\phi(x+iy)}}\\
	&=\limi{x}\sup_{0\leq v\leq A;y\in\Rb}\abs{1+\frac{\phi( x+v+iy)-\phi(x+iy)}{\phi(x+iy)}}\\
	&=\limi{x}\sup_{0\leq v\leq A;y\in\Rb}\abs{1+\frac{\int_{x}^{x+v}\phi'(w+iy)dw}{\phi(x+iy)}}  .
	\end{split}
	\end{equation*}
	However,
	\begin{equation*}
	\begin{split}
	&\limi{x}\sup_{0\leq v\leq A;y\in\Rb}\abs{\frac{\int_{x}^{x+v}\phi'(w+iy)dw}{\phi(x+iy)}}\leq A\limi{x}\sup_{x\leq v\leq x+A;y\in\Rb}\frac{\abs{\phi'(w+iy)}}{\abs{\phi(x+iy)}}\\
	&\leq A\limi{x}\sup_{x\leq v\leq x+A;y\in\Rb}\frac{\phi'(v)}{\abs{\phi(x+iy)}}\leq A\limi{x}\sup_{y\in\Rb}\frac{\phi'(x)}{\abs{\phi(x+iy)}}\\
	&\leq A\limi{x}\frac{\phi'(x)}{\phi(x)}=0  ,
	\end{split}
	\end{equation*}
	where in the second inequality we have used \eqref{eq:kappa''} with $n=1$,  in the third inequality we have used that $\phi'$ is non-increasing, see \ref{it:elementaryBiv}. in this Proposition, and for the last inequality we have invoked \eqref{eq:ratioBounds} and the evaluation of the last limit follows from \eqref{eq:kappa'_kappa_1}. To prove \eqref{eq:ineqIm}, we note that with the help of \eqref{eq:kappa''} for $n=1$ and \eqref{eq:ineq} it holds that
	\[\abs{\phi( x+ib)-\phi\lbrb{x}}=\abs{\int_{0}^{b}\phi'\lbrb{x+iv}dv}\leq |b|\phi'\lbrb{x}\leq \phi(x)\frac{|b|}{x},\]
	which is equivalent to the desired inequality.
\end{proof}

\begin{proposition} \label{prop:exo}
    \cite[Exercise III.7]{Bertoin-96}
    Let $\xi$ be a driftless subordinator and define $I(x) := \int_0^x \mubar{x}\D x$. Then the following are equivalent:
 \begin{enumerate}[label=(\roman*)]
        \item \label{item:two} $\liminfo{x} x\mubar{x}/I(x) >0$;
                \item  \label{item:one}$\liminfo{x} I(2x)/I(x) > 1$;
        \item \label{item:three}$\limsupi{\lambda} \phi(2 \lambda)/\phi(\lambda) < 2$;
        \item \label{item:four}$\limsupi{\lambda} \lambda \phi'(\lambda)/\phi(\lambda)<1$.
        
\end{enumerate}
    
\begin{proof}
\ref{item:two}$\iff$\ref{item:one} is proved in \cite[Proposition 10]{Bertoin-96}: since $I(2x) = I(x) + \int_x^{2x} \mubar{y} \D y $ and by
monotonicity of $I$ and $\bar{\mu}$ we get
\[\frac{2x \mubar{2x}}{I(2x)} \leq \frac{2x \mubar{2x}}{I(x)}\leq \frac{I(2x)}{I(x)} \leq 1 + \frac{x\mubar{x}}{I(x)} .\]
The result follows after taking $\liminfo{x}$.

\ref{item:one}$\Rightarrow$\ref{item:three}:
if $\phi(\infty) < \infty$, \ref{item:three}
is clear so we can assume $\phi(\infty) = \infty$. Note that we have the representation, see \cite[p. 75]{Bertoin-96}, 
\begin{equation}
\phi(\lambda) = q + \lambda^2 \IntOI e^{-\lambda x} I\lbrb{x} \D x.
\end{equation}\label{eq:phiI}
From \ref{item:one}, we can find $c, x_c > 0$ such that $I(2x) - I(x) \geq c I(x)$ for $x \leq x_c$ and
therefore
\begin{equation}\label{eq:phiminphi}
\begin{split}
\phi(\lambda) - \frac12 \phi(2\lambda) &= 2 \lambda^2 \IntOI e^{-2\lambda x}\lbrb{I(2x) - I(x)} \D x
\\
&\geq 
2 \lambda^2 \lbrb{\int_0^{x_c} e^{-2\lambda x}c I(x) \D x + \int_{x_c}^\infty e^{-2\lambda x}\lbrb{I(2x) - I(x)} \D x}
\\
&=\frac12 c(\phi(2\lambda) - q) + 2\lambda^2 \int_{x_c}^\infty e^{-2\lambda x}\lbrb{I(2x) - (c+1)I(x)} \D x.
\end{split}
\end{equation}
Because $\mu$ is a \LL measure, $I(1) < \infty$, so we can write $I(x) \leq I(1) + x\mubar{1}$, which leads to
\begin{equation*}
\begin{split}
\left|2\lambda^2 \int_{x_c}^\infty e^{-2\lambda x}\lbrb{I(2x) - (c+1)I(x)} \D x\right| 
&\leq
2\lambda^2 \int_{x_c}^\infty e^{-2\lambda x} \lbrb{2c + 1}I(2x) \D x\\
&\leq
\lbrb{2c + 1}2\lambda^2 \int_{x_c}^\infty e^{-2\lambda x}\lbrb{I(1) + 2x \mubar{1} }\D x
\\
&= (2c+1)e^{-2\lambda x_c}\lbrb{I(1)\lambda  + \mubar{1}\lbrb{2\lambda x_c + 1} }.
\end{split}
\end{equation*}
Therefore the last term in \eqref{eq:phiminphi} tends to zero as $\lambda\to\infty$ and we can get \ref{item:three} after dividing 
\eqref{eq:phiminphi} by
$\phi(2\lambda)$ and taking $\liminfi{\lambda}$.

\ref{item:three}$\Rightarrow$\ref{item:one}: assume \ref{item:three}. Therefore, there exists an $\epsilon \in (0, 1/2]$ such that for large enough $\lambda$,  
$\phi(2\lambda)/2 < (1-\epsilon)\phi(\lambda)$. Once again, note that \ref{item:one} is immediate in the case $\phi(\infty)<\infty$, so we
can assume $\phi(\infty)=\infty$. From the equality
\[
\phi(\lambda) = q + \lambda \IntOI e^{-\lambda t}\mubar{t} \D t
\]
and because $\bar{\mu}$ is non-increasing, we can obtain that for each $a>0$ and all $\lambda$ large enough
\begin{equation*}
\begin{split}
\frac{q}{2} + \lambda e^{-2a}I\lbrb{\frac{a}{\lambda}} &\leq \frac{q}{2} + \lambda \int_0^{a/\lambda} e^{-2\lambda t}\mubar{t} \D t\leq \frac{\phi(2\lambda)}{2} \leq (1-\epsilon)\phi(\lambda)\\
&= (1-\epsilon)\lbrb{q + \lambda \lbrb{\int_0^{ 2a/\lambda} + \int_{ 2a/\lambda}^\infty} e^{-\lambda t}\mubar{t} \D t}
\\
&\leq (1-\epsilon)\lbrb{q + \lambda I\lbrb{\frac{2a}{\lambda}} + \lambda\sum_{k=2}^\infty e^{-ak}\int^{a(k+1)/\lambda}_{ak/\lambda} \mubar{t} \D t}\\
&\leq (1-\epsilon)\lbrb{q + \lambda I\lbrb{\frac{2a}{\lambda}} + \frac{\lambda e^{-2a}}{1 - e^{-a}}\lbrb{ I\lbrb{\frac{2a}{\lambda}} -  I\lbrb{\frac{a}{\lambda}}}}.
\end{split}
\end{equation*}
Now, if pick $a$ such that $e^{2a}(1-\epsilon) = 1-\epsilon/2$ and divide the inequality above by $\lambda e^{-2a}I(2a/\lambda)$, we get
\begin{equation*}
\begin{split}
\frac{I(a/\lambda)}{I(2a/\lambda)} &\leq\frac{(1- 2\epsilon)e^{2a}q}{2\lambda I(2a/\lambda)} + 1 - \frac{\epsilon}{2} + \frac{1}{1 - e^{-a}}\lbrb{1 - \frac{I(a/\lambda)}{I(2a/\lambda)}}
\\
&\leq\frac{(1- 2\epsilon)e^{2a}q}{4a(\phi(\lambda/2a)-q)} + 1 - \frac{\epsilon}{2} + \frac{1}{1 - e^{-a}}\lbrb{1 - \frac{I(a/\lambda)}{I(2a/\lambda)}},
\end{split}
\end{equation*}
where the  inequality in the first term in the last inequality is proven in \cite[Lemma 3.4]{Schilling-Song-Vondracek-12}. Now, letting $\lambda \to \infty$ and using $\phi(\infty)= \infty$, we see that
\[\limsupi{\lambda} \frac{I(a/\lambda)}{I(2a/\lambda)} \leq \frac{1 - \epsilon/2 + (1-e^{-a})^{-1}}{1 + (1-e^{-a})^{-1}} < 1,
\]
which leads to \ref{item:one}.

For \ref{item:three}$\iff$\ref{item:four}, since $\phi(2\lambda) - \phi(\lambda) = \lambda \phi'(\xi)$ for $\xi \in [
\lambda, 2\lambda]$ and because $\phi'$ is non-decreasing, we have that
\[1 + \frac{\lambda \phi'(2\lambda)}{\phi(2\lambda)} \frac{\phi(2\lambda)}{\phi(\lambda)}\leq \frac{\phi(2\lambda)}{\phi(\lambda)} \leq 1 + \frac{\lambda \phi'(\lambda)}{\phi(\lambda)} \, .\]
Therefore, \ref{item:three}$\Leftarrow$\ref{item:four} is immediate and for the reverse, note that if rearrange, we get that
\[
 \frac{\phi(2\lambda)}{\phi(\lambda)} \lbrb{1 - \frac12\frac{2\lambda \phi'(2\lambda)}{\phi(2\lambda)}} \geq 1,
\]
so we must have
\[
\liminfi{\lambda} \lbrb{1 - \frac12\frac{2\lambda \phi'(2\lambda)}{\phi(2\lambda)}} > \frac12,
\]
which is equivalent to \ref{item:four}.

\end{proof}
   
\end{proposition}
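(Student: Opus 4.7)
The plan is to prove the four-way equivalence by establishing the chain \ref{item:one} $\iff$ \ref{item:two}, \ref{item:one} $\iff$ \ref{item:three}, and \ref{item:three} $\iff$ \ref{item:four}. Throughout, I will use only the monotonicity of $\mubar{\cdot}$ and $I$, the representation
\[
\phi(\lambda) = q + \lambda \IntOI e^{-\lambda t}\mubar{t}\D t = q + \lambda^2 \IntOI e^{-\lambda x} I(x)\D x,
\]
and elementary properties of $\phi'$ collected in Proposition~\ref{prop:kappa}.

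First, \ref{item:one} $\iff$ \ref{item:two} is immediate from the identity $I(2x)=I(x)+\int_x^{2x}\mubar{y}\D y$ together with the monotonicity of $\mubar{\cdot}$, which yields $x\mubar{2x}\leq \int_x^{2x}\mubar{y}\D y\leq x\mubar{x}$. Dividing by $I(x)$ and $I(2x)$ gives a sandwich of the form
\[
\frac{2x\mubar{2x}}{I(2x)} \leq \frac{I(2x)}{I(x)} \leq 1+\frac{x\mubar{x}}{I(x)},
\]
from which taking $\liminfo{x}$ establishes the equivalence.

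For \ref{item:two} $\Rightarrow$ \ref{item:three}, the case $\phi(\infty)<\infty$ is trivial, so assume $\phi(\infty)=\infty$. Using the second form of the Laplace representation, write
\[
\phi(\lambda)-\tfrac{1}{2}\phi(2\lambda) = 2\lambda^2 \IntOI e^{-2\lambda x}\bigl(I(2x)-I(x)\bigr)\D x.
\]
By \ref{item:two}, there exist $c,x_c>0$ with $I(2x)-I(x)\geq c I(x)$ for $x\leq x_c$. Splitting the integral at $x_c$ and using $I(x)\leq I(1)+x\mubar{1}$ on $[x_c,\infty)$ to show that the residual tail contribution vanishes as $\lambda\to\infty$ (after dividing by $\phi(2\lambda)$), one obtains $\limsupi{\lambda}\phi(2\lambda)/\phi(\lambda)\leq 2/(1+c')<2$ for a suitable $c'>0$.

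For the reverse \ref{item:three} $\Rightarrow$ \ref{item:two}, again assuming $\phi(\infty)=\infty$, fix $\epsilon\in(0,1/2]$ with $\phi(2\lambda)/2\leq(1-\epsilon)\phi(\lambda)$ for large $\lambda$. On the one hand, $\phi(2\lambda)/2\geq q/2+\lambda e^{-2a}I(a/\lambda)$ from restricting the representation of $\phi(2\lambda)$ to $[0,a/\lambda]$. On the other, a geometric decomposition of $\phi(\lambda)$ over intervals $[ak/\lambda,a(k+1)/\lambda]$ for $k\geq 2$ yields
\[
\phi(\lambda) \leq q+\lambda I\!\lbrb{\tfrac{2a}{\lambda}}+\tfrac{\lambda e^{-2a}}{1-e^{-a}}\bigl(I\!\lbrb{\tfrac{2a}{\lambda}}-I\!\lbrb{\tfrac{a}{\lambda}}\bigr).
\]
Choose $a$ so that $e^{2a}(1-\epsilon)=1-\epsilon/2$, divide by $\lambda e^{-2a}I(2a/\lambda)$, and use that $q/(\lambda I(2a/\lambda))\to 0$ (since the Laplace representation gives $\lambda I(2a/\lambda)\gtrsim \phi(\lambda/2a)-q\to\infty$). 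Taking $\limsupi{\lambda}$ then yields $\limsupi{\lambda}I(a/\lambda)/I(2a/\lambda)<1$, which is \ref{item:two} (after a change of variable).

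For \ref{item:three} $\iff$ \ref{item:four}, use that $\phi'$ is non-increasing (Proposition~\ref{prop:kappa}\ref{it:elementaryBiv}) combined with the mean value theorem: $\phi(2\lambda)-\phi(\lambda)=\lambda\phi'(\xi)$ for some $\xi\in[\lambda,2\lambda]$, so that
\[
1+\frac{\lambda\phi'(2\lambda)}{\phi(\lambda)}\leq \frac{\phi(2\lambda)}{\phi(\lambda)}\leq 1+\frac{\lambda\phi'(\lambda)}{\phi(\lambda)}.
\]
The right-hand bound immediately gives \ref{item:four} $\Rightarrow$ \ref{item:three}. Conversely, rearranging the lower bound as $\phi(2\lambda)/\phi(\lambda)\cdot(1-\lambda\phi'(2\lambda)/\phi(2\lambda))\geq 1$ forces $\liminfi{\lambda}(1-\tfrac{1}{2}\cdot 2\lambda\phi'(2\lambda)/\phi(2\lambda))>1/2$, equivalent to \ref{item:four}.

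The main obstacle will be \ref{item:three} $\Rightarrow$ \ref{item:two}: choosing the correct parameter $a$ and controlling the geometric tail decomposition to extract a uniform bound on $I(a/\lambda)/I(2a/\lambda)$ requires careful bookkeeping and an auxiliary estimate $\lambda I(2a/\lambda)\gtrsim \phi(\lambda/2a)-q$ (a Tauberian-flavored inequality as in \cite[Lemma~3.4]{Schilling-Song-Vondracek-12}). The other three implications are largely mechanical once the setup is fixed.
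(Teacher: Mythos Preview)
Your proposal is correct and follows essentially the same route as the paper: the same sandwich for \ref{item:two}$\iff$\ref{item:one}, the same Laplace representation and split at $x_c$ for the forward direction to \ref{item:three}, the same geometric decomposition with the specific choice $e^{2a}(1-\epsilon)=1-\epsilon/2$ and the appeal to \cite[Lemma~3.4]{Schilling-Song-Vondracek-12} for the reverse, and the same mean-value argument for \ref{item:three}$\iff$\ref{item:four}. The only slip is cosmetic: at the end of your \ref{item:three}$\Rightarrow$\ref{item:two} paragraph, the conclusion $\limsupi{\lambda}I(a/\lambda)/I(2a/\lambda)<1$ is literally \ref{item:one}, not \ref{item:two}; since you have already proved \ref{item:one}$\iff$\ref{item:two} this does not affect the logic.
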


    \section*{Acknowledgments}
The second author wishes to thank prof. W. Schachermayer and his group for the hospitality at the University of Vienna, where the second author spent time during his Marie-Sklodowska Curie project MOCT 657025 and where the first ideas for this work appeared. The first author was partially supported by the Bulgarian Ministry of Education and Science under the National Research Programme ”Young scientists and postdoctoral students” approved by DCM No. 577/17.08.2018. The second author was partially supported by the financial funds allocated to the Sofia University “St. Kl. Ohridski”, grant No. 80-10-87/2021. The authors are grateful to B. Haas who noticed the slight inaccuracy in Corollary \ref{cor:asympCPP}.

    \bibliographystyle{plain}
	\bibliography{relevant_citations}
\end{document}